\newcommand\R{\mathbb R}
\newcommand\RR{\mathbb R}
\newcommand\D{\mathcal D}
\newcommand\K{\mathcal K}
\newcommand\T{\mathcal T}
\newcommand\E{\mathcal E}
\renewcommand\div{\operatorname{div}}
\newcommand\eps{\operatorname{\epsilon}}
\newcommand\x{\times}
\renewcommand\d{\mathrm d}
\renewcommand\t{\tilde}
\newcommand\lbra{[\![}
\newcommand\rbra{]\!]}
\newcommand\lbrac{\,[\!\!\!\{}
\newcommand\rbrac{\}\!\!\!]\,}
\renewcommand\ll{|\kern-2pt|\kern-2pt|}
\numberwithin{equation}{section}
\theoremstyle{plain}
\newtheorem{thm}{Theorem}
\newtheorem{lem}[thm]{Lemma}
\numberwithin{thm}{section}
\theoremstyle{remark}
\def\stack#1#2#3{\rlap{#1}\lower#3\hbox{#2}}
\def\twiddlespace{1.2truept}
\def\dtwiddle{\displaystyle\sim}
\def\ttwiddle{\textstyle\sim}
\def\stwiddle{\scriptstyle\sim}
\def\sstwiddle{\scriptscriptstyle\sim}
\def\doubledtwiddle{\stack{$\dtwiddle$}{$\dtwiddle$}{\twiddlespace}}
\def\doublettwiddle{\stack{$\ttwiddle$}{$\ttwiddle$}{\twiddlespace}}
\def\doublestwiddle{\stack{$\stwiddle$}{$\stwiddle$}{\twiddlespace}}
\def\doublesstwiddle{\stack{$\sstwiddle$}{$\sstwiddle$}{\twiddlespace}}
\def\tripledtwiddle{\stack{$\dtwiddle$}{$\doubledtwiddle$}{\twiddlespace}}
\def\triplettwiddle{\stack{$\ttwiddle$}{$\doublettwiddle$}{\twiddlespace}}
\def\triplestwiddle{\stack{$\stwiddle$}{$\doublestwiddle$}{\twiddlespace}}
\def\triplesstwiddle{\stack{$\sstwiddle$}{$\doublesstwiddle$}{\twiddlespace}}
\def\quadrupledtwiddle{\stack{$\dtwiddle$}{$\tripledtwiddle$}{\twiddlespace}}
\def\quadruplettwiddle{\stack{$\ttwiddle$}{$\triplettwiddle$}{\twiddlespace}}
\def\quadruplestwiddle{\stack{$\stwiddle$}{$\triplestwiddle$}{\twiddlespace}}
\def\quadruplesstwiddle{\stack{$\sstwiddle$}{$\triplesstwiddle$}{\twiddlespace}}\def\quadru
\newcommand\!a{{\boldsymbol a}}
\newcommand\!b{{\boldsymbol b}}
\newcommand\!c{{\boldsymbol c}}
\newcommand\!d{{\boldsymbol d}}
\newcommand\!e{{\boldsymbol e}}
\newcommand\!f{{\boldsymbol f}}
\newcommand\!g{{\boldsymbol g}}
\newcommand\!h{{\boldsymbol h}}
\newcommand\!i{{\boldsymbol i}}
\newcommand\!j{{\boldsymbol j}}
\newcommand\!k{{\boldsymbol k}}
\newcommand\!l{{\boldsymbol l}}
\newcommand\!m{{\boldsymbol m}}
\newcommand\!n{{\boldsymbol n}}
\newcommand\!o{{\boldsymbol o}}
\newcommand\!p{{\boldsymbol p}}
\newcommand\!q{{\boldsymbol q}}
\newcommand\!r{{\boldsymbol r}}
\newcommand\!s{{\boldsymbol s}}
\newcommand\!t{{\boldsymbol t}}
\newcommand\!u{{\boldsymbol u}}
\newcommand\!v{{\boldsymbol v}}
\newcommand\!w{{\boldsymbol w}}
\newcommand\!x{{\boldsymbol x}}
\newcommand\!y{{\boldsymbol y}}
\newcommand\!z{{\boldsymbol z}}
\newcommand\!A{{\boldsymbol A}}
\newcommand\!B{{\boldsymbol B}}
\newcommand\!C{{\boldsymbol C}}
\newcommand\!D{{\boldsymbol D}}
\newcommand\!E{{\boldsymbol E}}
\newcommand\!F{{\boldsymbol F}}
\newcommand\!G{{\boldsymbol G}}
\newcommand\!H{{\boldsymbol H}}
\newcommand\!I{{\boldsymbol I}}
\newcommand\!J{{\boldsymbol J}}
\newcommand\!K{{\boldsymbol K}}
\newcommand\!L{{\boldsymbol L}}
\newcommand\!M{{\boldsymbol M}}
\newcommand\!N{{\boldsymbol N}}
\newcommand\!O{{\boldsymbol O}}
\newcommand\!P{{\boldsymbol P}}
\newcommand\!Q{{\boldsymbol Q}}
\newcommand\!R{{\boldsymbol R}}
\newcommand\!S{{\boldsymbol S}}
\newcommand\!T{{\boldsymbol T}}
\newcommand\!U{{\boldsymbol U}}
\newcommand\!V{{\boldsymbol V}}
\newcommand\!W{{\boldsymbol W}}
\newcommand\!X{{\boldsymbol X}}
\newcommand\!Y{{\boldsymbol Y}}
\newcommand\!Z{{\boldsymbol Z}}
\newcommand\!alpha{{\boldsymbol\alpha}}
\newcommand\!beta{{\boldsymbol\beta}}
\newcommand\!gamma{{\boldsymbol\gamma}}
\newcommand\!delta{{\boldsymbol\delta}}
\newcommand\!epsilon{{\boldsymbol\epsilon}}
\newcommand\!zeta{{\boldsymbol\zeta}}
\newcommand\!eta{{\boldsymbol\eta}}
\newcommand\!theta{{\boldsymbol\theta}}
\newcommand\!iota{{\boldsymbol\iota}}
\newcommand\!kappa{{\boldsymbol\kappa}}
\newcommand\!lambda{{\boldsymbol\lambda}}
\newcommand\!mu{{\boldsymbol\mu}}
\newcommand\!nu{{\boldsymbol\nu}}
\newcommand\!xi{{\boldsymbol\xi}}
\newcommand\!pi{{\boldsymbol\pi}}
\newcommand\!rho{{\boldsymbol\rho}}
\newcommand\!sigma{{\boldsymbol\sigma}}
\newcommand\!tau{{\boldsymbol\tau}}
\newcommand\!upsilon{{\boldsymbol\upsilon}}
\newcommand\!phi{{\boldsymbol\phi}}
\newcommand\!chi{{\boldsymbol\chi}}
\newcommand\!psi{{\boldsymbol\psi}}
\newcommand\!omega{{\boldsymbol\omega}}
\newcommand\!varepsilon{{\boldsymbol\varepsilon}}
\newcommand\!vartheta{{\boldsymbol\vartheta}}
\newcommand\!varpi{{\boldsymbol\varpi}}
\newcommand\!varrho{{\boldsymbol\varrho}}
\newcommand\!varsigma{{\boldsymbol\varsigma}}
\newcommand\!varphi{{\boldsymbol\varphi}}
\newcommand\!Gamma{{\boldsymbol\Gamma}}
\newcommand\!Delta{{\boldsymbol\Delta}}
\newcommand\!Theta{{\boldsymbol\Theta}}
\newcommand\!Lambda{{\boldsymbol\Lambda}}
\newcommand\!Xi{{\boldsymbol\Xi}}
\newcommand\!Pi{{\boldsymbol\Pi}}
\newcommand\!Sigma{{\boldsymbol\Omega\eigma}}
\newcommand\!Upsilon{{\boldsymbol\Upsilon}}
\newcommand\!Phi{{\boldsymbol\Phi}}
\newcommand\!Psi{{\boldsymbol\Psi}}
\newcommand\!Omega{{\boldsymbol\Omega}}
\begin{document}

\title 
{Discrete Korn's inequality for shells}

\author{Sheng Zhang}
\thanks{Department of Mathematics, Wayne State University, Detroit, MI 48202 (\texttt{szhang@wayne.edu})}


\begin{abstract}
We prove Korn's inequalities for Naghdi and Koiter shell models defined on spaces of discontinuous piecewise functions.
They are useful in study of discontinuous finite element methods for shells.
\vspace{12pt}

\noindent{\sc Key words.} Korn's inequality, Naghdi shell, Koiter shell, discontinuous finite elements.
\newline \noindent{\sc Subject classification.} 65N30, 46E35, 74S05.
\end{abstract}
\maketitle

\section{Introduction}
In the Naghdi shell model, the strain energy is a sum of bending, membrane, and transverse shear strain energies.
The three strains are expressed in terms of the model primary variables comprising  displacement of the shell mid-surface and rotation of normal fibers,
in which derivatives and function values are combined
together in a complicated manner by the curvature tensors and Christoffel symbols.
Korn's  inequality for shells \cite{BCM}  establishes an equivalence between the strain energy norm of the primary variables 
and the their  usual Sobolev norms, which ensures  the wellposedness of the
the Naghdi shell model in Sobolev spaces.
The situation for the Koiter shell model is similar, which excludes the transverse shear effect in shell deformation and only uses 
mid-surface displacement as the primary variables, for which  a Korn's inequality is also available \cite{BCM}.
When analyzing conforming finite element methods for shells \cite{Arnold-Brezzi, Bernadou, Bathe-book}, 
such continuous version of Korn's inequalities play a fundamental role
in that
they attribute most of the numerical analysis and estimates to that in Sobolev spaces.
To deal with discontinuous Galerkin methods for shells \cite{Koiter-shell, Naghdi-shell},
it is desirable to have appropriate generalizations of the Korn's inequalities that  can be applied to piecewise defined discontinuous functions. 
It seems that such discrete
Korn's inequalities do not trivially follow from the Korn's inequalities for shells.
Methods of proving discrete Korn's inequality for plane elasticity \cite{Brenner-2} can not be easily adapted to shell problems either. 
It is the purpose of
this paper to establish such discrete Korn's inequalities for both the Naghdi and Koiter shell models, for the most general geometry of shell mid-surfaces,
without making any additional assumption on the regularity of shell model solutions.

Let $\t\Omega\subset\R^3$
be the middle surface of a shell of thickness $2\eps$.
It is  the image of a two-dimensional domain
$\Omega\subset\R^2$ through a mapping $\!Phi$ that has third order continuous derivatives. 
The Naghdi shell model is a two-dimensional model defined on $\Omega$.
In the following, we use super scripts to indicate contravariant components of tensors, and
use subscripts to indicate covariant components. Greek super and subscripts take their values in $\{1, 2\}$,
and Latin scripts take their values
in $\{1, 2, 3\}$. Summation rules with respect to repeated and super and subscripts will also be used.
We use $H^\alpha(\tau)$ to denote the $L^2$ based Sobolev space of order $\alpha$ of functions defined on $\tau$, in which  the 
norm and semi-norm are denoted by $\|\cdot\|_{k, \tau}$ and  $|\cdot|_{k, \tau}$, respectively. We also use $H^0(\tau)$ to denote $L^2(\tau)$. 
 When $\tau=\Omega$, we simply write the space as $H^\alpha$.

The coordinates $x_\alpha\in\Omega$
furnish the curvilinear coordinates on $\t\Omega$ through the mapping $\!Phi$.
We assume that at any point on the surface,
along the coordinate lines,
the two tangential vectors
$\!a_{\alpha}={\partial\!Phi}/{\partial x_{\alpha}}$
are linearly independent.
The unit vector
$\!a_3=(\!a_1\x\!a_2)/|\!a_1\x\!a_2|$ is normal to $\t\Omega$.
The triple $\!a_i$ furnishes the covariant basis on $\t\Omega$.
The contravariant basis
$\!a^i$ is defined by the relations
$\!a^{\alpha}\cdot\!a_{\beta}=\delta^{\alpha}_{\beta}$ and $\!a^3=\!a_3$,
in which $\delta^{\alpha}_{\beta}$ is the Kronecker delta.
It is obvious that $\!a^{\alpha}$ are also tangent to the surface.
The metric tensor has the covariant components
$a_{\alpha\beta}=\!a_{\alpha}\cdot\!a_{\beta}$,  the determinant of
which is denoted by $a$. The contravariant components
are given by
$a^{\alpha\beta}=\!a^{\alpha}\cdot\!a^{\beta}$.
The curvature tensor
has covariant components
$b_{\alpha\beta}=\!a_3\cdot\partial_{\beta}\!a_{\alpha}$, whose
mixed components are $b^{\alpha}_{\beta}=a^{\alpha\gamma}b_{\gamma\beta}$.
The symmetric tensor $c_{\alpha\beta}=b^\gamma_\alpha b_{\gamma\beta}$ is called the third
fundamental form of the surface.
The Christoffel symbols
are defined by
$\Gamma^{\gamma}_{\alpha\beta}
=\!a^{\gamma}\cdot\partial_{\beta}\!a_{\alpha}$,
which are symmetric with respect to the subscripts.

The Naghdi shell model \cite{Naghdi} uses the covariant components $u_\alpha$ of the 
tangential displacement vector $\!u=u_\alpha\!a^\alpha$, coefficient $w$ of the normal displacement $w\!a^3$ of the shell mid-surface,  and
the covariant components $\theta_\alpha$ of the  normal fiber rotation vector $\!theta=\theta_\alpha\!a^\alpha$ as the primary variables.
The bending strain tensor, membrane strain tensor, and transverse shear strain vector associated with a deformation 
represented by
such a set of primary variables are
\begin{equation}\label{N-bending}
\rho_{\alpha\beta}(\!theta, \!u, w)=
\frac12(\theta_{\alpha|\beta}+\theta_{\beta|\alpha})-\frac12(b^\gamma_\alpha u_{\gamma|\beta}+b^\gamma_\beta u_{\gamma|\alpha})+c_{\alpha\beta}w,
\end{equation}
\begin{equation}\label{N-metric}
\gamma_{\alpha\beta}(\!u,w)=
\frac12(u_{\alpha|\beta}+u_{\beta|\alpha})
-b_{\alpha\beta}w,
\end{equation}
\begin{equation}\label{N-shear}
\tau_\alpha(\!theta, \!u, w)=\partial_\alpha w+b^\gamma_\alpha u_\gamma+\theta_\alpha.
\end{equation}
Here, the covariant derivative of a vector $u_\alpha$ is defined by
\begin{equation}\label{covariant-derivative}
u_{\alpha|\beta}=\partial_{\beta}u_{\alpha}-\Gamma^{\gamma}_{\alpha\beta}
u_{\gamma}.
\end{equation}
The loading forces on the shell body and upper and lower surfaces
enter the shell model as resultant loading forces per unit area  on the shell middle surface,
of which the tangential force density is
$p^{\alpha}\!a_{\alpha}$ and transverse force density $p^3\!a_3$.
Let the boundary $\partial\t\Omega$ be divided to $\partial^D\t\Omega\cup\partial^S\t\Omega\cup\partial^F\t\Omega$.
On $\partial^D\t\Omega$ the shell is clamped, on $\partial^S\t\Omega$ the shell is soft-simply supported, and
on $\partial^F\t\Omega$ the shell is free of displacement constraint and subject to force or moment  only.
(There are $32$ different ways to specify boundary conditions at any point on the shell boundary, of which we consider the three most typical.)
Let $\!H^1=H^1\x H^1$. The shell model is
defined in the Hilbert space
\begin{multline}\label{N-space}
H=\{(\!phi, \!v, z)\in \!H^1\x\!H^1\x H^1;\  v_\alpha \text{ and } z \text{ are }0\  \text{on}\ \partial^D\Omega\cup\partial^S\Omega, \\
\text{ and  }\theta_\alpha \text{ is }0\
\text{on}\ \partial^D\Omega\}.
\end{multline}
The model determines $(\!theta, \!u, w)\in H$
such that
\begin{multline}\label{N-model}
\frac13\int_{\Omega}
a^{\alpha\beta\lambda\gamma}\rho_{\lambda\gamma}(\!theta, \!u, w)
\rho_{\alpha\beta}
(\!phi, \!v, z)\sqrt a\d x_1 \d x_2\\
+\eps^{-2}\left[\int_{\Omega}
a^{\alpha\beta\lambda\gamma}\gamma_{\lambda\gamma}(\!u,w)
\gamma_{\alpha\beta}(\!v,z)+\kappa\mu\int_{\Omega}a^{\alpha\beta}\tau_\alpha(\!theta, \!u, w)\tau_\beta(\!phi, \!v, z)\right]\sqrt a\d x_1\d x_2
\\
=
\int_{\Omega}
(p^{\alpha}v_{\alpha}+
p^3z)\sqrt a\d x_1\d x_2
+\int_{\partial^S\t\Omega}r^\alpha\phi_\alpha\d\t s
+\int_{\partial^F\t\Omega}\left(q^\alpha v_\alpha+q^3z+r^\alpha\phi_\alpha\right)\d\t s
\\ \forall\
(\!phi, \!v,z) \in H.
\end{multline}
Here, $q^i$ and $r^\alpha$ are the force resultant and
moment resultant on the shell edge \cite{Naghdi}.
The factor $\kappa$ is a shear correction factor.
The last two integrals on the shell edge is taken with respect to the arc length of the boundary of $\t\Omega$. 
The fourth order contravariant tensor
$a^{\alpha\beta\gamma\delta}$ is the elastic tensor of the shell,
defined by
\begin{equation*}
a^{\alpha\beta\gamma\delta}=\mu (a^{\alpha\gamma}a^{\beta\delta}+a^{\beta\gamma}a^{\alpha\delta})+
\frac{2\mu\lambda}{2\mu+\lambda}
a^{\alpha\beta}a^{\gamma\delta}.
\end{equation*}
Here, $\lambda$ and $\mu$ are the Lam\'e coefficients of the elastic material. It satisfies the condition that 
there are constants $C_1$ and $C_2$
that only depend on the shell mid-surface and the Lam\'e coefficients of the shell material such that for any
tensor $\varsigma_{\alpha\beta}$
\begin{equation*}
\sum_{\alpha,\beta=1}^2|\varsigma_{\alpha\beta}|^2\le C_1a^{\alpha\beta\lambda\gamma}\varsigma_{\alpha\beta}\varsigma_{\lambda\gamma},\quad
a^{\alpha\beta\lambda\gamma}\varsigma_{\alpha\beta}\varsigma_{\lambda\gamma}\le C_2\sum_{\alpha,\beta=1}^2|\varsigma_{\alpha\beta}|^2.
\end{equation*}
Also, there are constants $C_1$ and $C_2$ that only depend on the shell mid-surface such that for any vector $\varsigma_\alpha$, 
\begin{equation*}
\sum_{\alpha=1}^2|\varsigma_\alpha|^2\le C_1 a^{\alpha\beta}\varsigma_\alpha \varsigma_\beta,\quad
a^{\alpha\beta}\varsigma_\alpha \varsigma_\beta\le C_2\sum_{\alpha=1}^2|\varsigma_\alpha|^2.
\end{equation*}
These inequalities together with a Korn's inequality \cite{BCM} assures that  
the Naghdi shell model  \eqref{N-model} has a unique solution in the space $H$. The Korn's inequality states that  
there is a $C$ that could be dependent on the shell mid-surface such that 
\begin{multline}\label{N-Korn-continuous}
\|\!theta\|_{\!H^1}+\|\!u\|_{\!H^1}+\|w\|_{H^1}\\
\le C\left[
\sum_{\alpha,\beta=1}^2\|\rho_{\alpha\beta}(\!theta, \!u, w)\|^2_{L^2}+
\sum_{\alpha,\beta=1}^2\|\gamma_{\alpha\beta}(\!u, w)\|^2_{L^2}+
\sum_{\alpha=1}^2\|\tau_{\alpha}(\!theta, \!u, w)\|^2_{L^2}+f^2(\!theta, \!u, w)\right]^{1/2}\\
\forall\ (\!theta,\!u, w)\in \!H^1\x\!H^1\x H^1.
\end{multline}
Here $f$ is a continuous  seminorm satisfying the rigid body motion condition that if  $(\!theta, \!u, w)$ defines a rigid body motion and 
$f(\!theta, \!u, w)=0$ then $(\!theta, \!u, w)=0$. 
The displacement functions  $(\!theta, \!u, w)\in \!H^1\x\!H^1\x H^1$ defines a rigid body if and only if
$\rho_{\alpha\beta}(\!theta, \!u, w)=0$,  $\gamma_{\alpha\beta}(\!u, w)=0$,  and $\tau_{\alpha}(\!theta, \!u, w)=0$ \cite{BCM}.

We assume that $\Omega$ is a bounded polygon.
Let $\T_h$ be a shape regular, but not necessarily quasi-uniform,
triangulation on $\Omega$.
Shape regularity of triangulations
is a crucial notion in this paper. It is worthwhile to recall its definition
here. Considering a triangle, we let $r$ and $R$ be the radii of its
inscribed circle and circumcircle, respectively.
Then the ratio $R/r$ is called its shape regularity constant, or simply shape regularity.
For a triangulation, the maximum
of shape regularities of all its triangles
is called the shape regularity of the triangulation \cite{Ciarlet-FEM-book}, denoted by $\K$.
We will need to consider a (infinite) class of triangulations. For a class, the
{\em shape regularity} $\K$ is the supreme of all the shape regularities
of its triangulations.
For the triangulation $\T_h$, we use $\T_h$ to denote the set of all (open) triangular elements
of the partition,
and use $\Omega_h$ to denote the union of
all the open triangular elements.
We use $\E^0_h$ to denote the set of all interior (open) edges and $\E^\partial_h$ all
boundary edges, and let $\E_h=\E^0_h\cup\E^\partial_h$. We use $h_\tau$ to denote the diameter of an element 
$\tau\in\T_h$ and use
$h_e$ to denote the length of an edge $e\in \E_h$.

Let $H^1_h$ be the space of piecewise $H^1$ functions subordinated to the triangulation
$\T_h$.
A function in $H^1_h$ is independently defined on
every element $\tau\in\T_h$ on which it belongs to $H^1(\tau)$. 
A function $u$ in $H^1_h$
is certainly
in $L^2(\Omega_h)$. 
On an edge $e\in\E^0_h$, a function $u$ may have two different traces from the
two elements sharing $e$. We use $\lbra u\rbra$ to denoted the absolute value of difference
of the two traces, which is the jump of $u$ over $e$.
In the space
$H^1_h$, we define a norm
\begin{equation}\label{H1hnorm}
\|u\|_{H^1_h}:=\left[\sum_{\tau\in\T_h}\|u\|^2_{1,\tau}
+\sum_{e\in\E^0_h}\frac{1}{h_e}\int_e\lbra u\rbra^2\d s\right]^{1/2}.
\end{equation}
Let $\!H^1_h=H^1_h\x H^1_h$. 
We prove the discrete Korn's inequality for Naghdi shell that 
for all $(\!theta,\!u, w)\in \!H^1_h\x\!H^1_h\x H^1_h$
\begin{multline}\label{N-Korn-discrete}
\|\!theta\|^2_{\!H^1_h}+\|\!u\|^2_{\!H^1_h}+\|w\|^2_{H^1_h}\\
\le C\left[
\sum_{\alpha,\beta=1}^2\|\rho_{\alpha\beta}(\!theta, \!u, w)\|^2_{0, \Omega_h}+
\sum_{\alpha,\beta=1}^2\|\gamma_{\alpha\beta}(\!u, w)\|^2_{0, \Omega_h}+
\sum_{\alpha=1}^2\|\tau_{\alpha}(\!theta, \!u, w)\|^2_{0, \Omega_h}+f^2(\!theta, \!u, w)\right.\\
\left. +\sum_{e\in\E^0_h}
\frac{1}{h_e}\int_e
\sum_{\alpha=1}^2\left(\lbra \theta_\alpha\rbra^2+\lbra u_\alpha\rbra^2\right)\d s
+\sum_{e\in\E^0_h}\frac{1}{h_e}\int_e\lbra w\rbra^2\d s\right].
\end{multline}
Here $f$ is a seminorm such that $f(\!theta, \!u, w)\le C(\|\!theta\|_{\!H^1_h}+\|\!u\|_{\!H^1_h}+\|w\|_{H^1_h})$
and it satisfies the rigid body motion condition that 
if  $(\!theta, \!u, w)$ defines a rigid body motion and $f(\!theta, \!u, w)=0$ then $(\!theta, \!u, w)=0$.
The constant $C$ could be dependent on the shell mid-surface and the shape regularity $\K$ of the triangulation
$\T_h$, but otherwise is independent of the triangulation. We shall simply say that such a constant is independent of $\T_h$.

The Koiter shell model \cite{Koiter} uses the covariant components $u_\alpha$ of the tangential displacement $u_\alpha\!a^\alpha$
and the coefficient $w$ of the normal  displacement $w\!a^3$  
of the shell mid-surface as the primary variable. Such a displacement 
deforms the surface $\t\Omega$ and changes its curvature and metric tensors. The linearized change in curvature tensor is the bending
strain tensor. It is expressed in terms of the displacement components as 
\begin{equation}\label{K-curvature}
\rho^K_{\alpha\beta}(\!u, w)=\partial^2_{\alpha\beta}w-\Gamma^{\gamma}_{\alpha\beta}
\partial_{\gamma}w+b^{\gamma}_{\alpha|\beta}u_{\gamma}+
b^{\gamma}_{\alpha}u_{\gamma|\beta}+b^{\gamma}_{\beta}u_{\gamma|\alpha}-c_{\alpha\beta}w.
\end{equation}
The linearized change of metric tensor is the membrane strain tensor $\gamma_{\alpha\beta}(\!u,w)$ that has the same expression as in Naghdi model
\eqref{N-metric}.
There is no transverse shear in the Koiter model. Indeed the bending strain tensor \eqref{K-curvature} can be obtained from the bending strain 
tensor of the Naghdi model \eqref{N-bending} by eliminating the rotation vector $\!theta$ using the zero-shear condition 
that $\tau_\alpha(\!theta, \!u, w)=0$, and multiplying the result by $-1$. I.e.,
\begin{equation*}
\rho^K_{\alpha\beta}(\!u, w)=-\rho_{\alpha\beta}(\!theta, \!u, w) \text{ with } \theta_\alpha=-\partial_\alpha w-b^\beta_\alpha u_\beta.
\end{equation*}

As in the Naghdi model, 
we let the boundary $\partial\t\Omega$ be divided to $\partial^D\t\Omega\cup\partial^S\t\Omega\cup\partial^F\t\Omega$.
On $\partial^D\t\Omega$ the shell is clamped, on $\partial^S\t\Omega$ the shell is simply supported, and
on $\partial^F\t\Omega$ the shell is free of displacement constraint and subject to force or moment  only.
(There are $16$ different ways to specify boundary conditions at any point on the shell boundary, of which we consider the three most
typical.)
The shell model is
defined in the Hilbert space
\begin{multline}\label{K-space}
H^K=\{(\!v, z)\in \!H^1\x H^2\ | \ v_\alpha \text{ and } z \text{ are }0\  \text{on}\ \partial^D\Omega\cup\partial^S\Omega, \\
\text{ and the normal derivative of  }z\text{ is }0\
\text{on}\ \partial^D\Omega\}.
\end{multline}
The model determines $(\!u, w)\in H^K$
such that
\begin{multline}\label{K-model}
\frac13\int_{\Omega}
a^{\alpha\beta\lambda\gamma}\rho^K_{\lambda\gamma}(\!u, w)
\rho^K_{\alpha\beta}
(\!v, z)\sqrt a\d x_1\d x_2
+\eps^{-2}\int_{\Omega}
a^{\alpha\beta\lambda\gamma}\gamma_{\lambda\gamma}(\!u,w)
\gamma_{\alpha\beta}(\!v,z)\sqrt a\d x_1\d x_2\\
=
\int_{\Omega}
(p^{\alpha}v_{\alpha}+
p^3z)\sqrt a\d x_1\d x_2
+\int_{\partial^S\t\Omega}mD_{\t\!n}z\d\t s
+\int_{\partial^F\t\Omega}\left(q^\alpha v_\alpha+q^3z+mD_{\t\!n}z\right)\d\t s
\\ 
\forall\ (\!v,z) \in H^K.
\end{multline}
Here, $q^i$ and $m$ are resultant loading functions on the shell boundary, which can be calculated from force resultants and
moment resultants on the shell edge \cite{Koiter}. The scalar $z$ can be viewed as
defined on $\t\Omega$. We let $\t\!n=\t n^\alpha\!a_\alpha$ be the unit outward normal to $\partial\t\Omega$ that is tangent to
$\t\Omega$. The derivative $D_{\t \!n}z:=\t n^\alpha\partial_\alpha z$ is the directional derivative  in the direction of $\t\!n$ with respect to arc length.
The elastic tensor 
$a^{\alpha\beta\gamma\delta}$ is the same as in Naghdi model.
The wellposedness of the Koiter model \eqref{K-model}
hinges on a Korn's inequality for Koiter shell \cite{BCM} that  there is a constant $C$ such that
\begin{multline}\label{K-Korn-continuous}
\|\!u\|_{\!H^1}+\|w\|_{H^2}\le C
\left[\sum_{\alpha,\beta=1,2}\|\rho^K_{\alpha\beta}(\!u, w)\|^2_{L^2}+\sum_{\alpha,\beta=1,2}\|\gamma_{\alpha\beta}(\!u, w)\|^2_{L^2}+f^2(\!u, w)\right]^{1/2}
\\
\ \ \forall\
\!u\in \!H^1,\ w\in H^2.
\end{multline}
Here $f(\!u, w)$ is a semi-norm on $\!H^1\x H^2$ that satisfies the rigid body motion condition that
if $(\!u, w)$ defines a rigid body motion and $f(\!u, w)=0$ then $\!u=0$ and $w=0$.
A displacement $(\!u, w)\in \!H^1\x H^2$ defines  a rigid body motion of the shell mid-surface
if there are constant vectors $\!c$ and $\!d$ such that  $u_\alpha\!a^\alpha+w\!a^3=\!c+\!d\x\!Phi(x_1, x_2)$. 
This is equivalent to $\rho^K_{\alpha\beta}(\!u, w)=0$ and  $\gamma_{\alpha\beta}(\!u, w)=0$ \cite{BCM}.

For the Koiter model, on the triangulation $\T_h$, we also need to consider piecewise $H^2$ functions that are independently defined on each element, with 
the norm defined by
\begin{multline}\label{H2hnorm}
\|w\|_{H^2_h}:=
\left[\sum_{\tau\in\T_h}\|w\|^2_{2,\tau}
+\sum_{e\in \E^0_h}\left(\sum_{\alpha=1, 2}
h^{-1}_e\int_{e}\lbra \partial_\alpha w\rbra^2\d s
+
h^{-1}_e\int_{e}\lbra w\rbra^2\d s\right)\right]^{1/2}.\hfill
\end{multline}
Let $f(\!u, w)$ be a semi-norm that is continuous with respect to this norm such that there is a $C$ only dependent on $\K$ of $\T_h$
and
\begin{equation*} 
|f(\!u, w)|\le C(\|\!u\|_{\!H^1_h}+\|w\|_{H^2_h})\ \forall\ (\!u, w)\in \!H^1_h\x H^2_h.
\end{equation*}
We  assume that $f$ satisfies the rigid body motion condition that
if $(\!u, w)\in \!H^1\x H^2$ defines a rigid body motion and $f(\!u, w)=0$ then $\!u=0$ and $w=0$.
We have the discrete Korn's inequality for Koiter shell that for all $\!u\in \!H^1_h$ and $w\in  H^2_h$
\begin{multline}\label{K-Korn-discrete}
\|\!u\|_{\!H^1_h}+\|w\|_{H^2_h}\le C
\left[
\sum_{\alpha,\beta=1,2}\|\rho^K_{\alpha\beta}(\!u, w)\|^2_{0,\Omega_h}+\sum_{\alpha,\beta=1,2}\|\gamma_{\alpha\beta}(\!u, w)\|^2_{0,\Omega_h}\right.
\\
\left.+\sum_{e\in \E^0_h}\left(
\sum_{\alpha=1,2}h^{-1}_e\int_{e}\lbra u_{\alpha}\rbra^2\d s
+\sum_{\alpha=1, 2}
h^{-1}_e\int_{e}\lbra \partial_\alpha w\rbra^2\d s
+
h^{-1}_e\int_{e}\lbra w\rbra^2\d s\right)
+f^2(\!v, z)
\right]^{1/2}.
\end{multline}

With this inequality established, one may add the term $\sum_{e\in \E^0_h}h^{-3}_e\int_{e}\lbra w\rbra^2\d s$ to both sides to obtain a new inequality. 
Let $\lbra \partial_s w\rbra$ and $\lbra \partial_n w\rbra$ be the jumps in the tangential and normal derivatives of $w$ over and edge $e$, respectively. Then 
we have the identity
\begin{equation*}
\sum_{\alpha=1}^2\lbra \partial_\alpha w\rbra^2=\lbra \partial_s w\rbra^2+\lbra \partial_n w\rbra^2.
\end{equation*}
If $w$ is a piecewise polynomial, we have the inverse inequality $\int_e\lbra \partial_s w\rbra^2\le Ch^{-2}_e\int_e\lbra w\rbra^2$.
Thus for piecewise polynomials $u_\alpha$ and 
$w$, we have the following variant of discrete Korn's inequality for Koiter shell.
\begin{multline}\label{K-Korn-discrete-h-3}
\|\!u\|_{\!H^1_h}+\|w\|_{\bar H^2_h}\le C
\left[
\sum_{\alpha,\beta=1,2}\|\rho^K_{\alpha\beta}(\!u, w)\|^2_{0,\Omega_h}+\sum_{\alpha,\beta=1,2}\|\gamma_{\alpha\beta}(\!u, w)\|^2_{0,\Omega_h}\right.
\\
\left.+\sum_{e\in \E^0_h}\left(
\sum_{\alpha=1,2}h^{-1}_e\int_{e}\lbra u_{\alpha}\rbra^2\d s
+
h^{-1}_e\int_{e}\lbra \partial_nw\rbra^2\d s
+
h^{-3}_e\int_{e}\lbra w\rbra^2\d s\right)
+f^2(\!u, w)
\right]^{1/2}.
\end{multline}
Here 
\begin{multline}\label{H2hnorm-h-3}
\|w\|_{\bar H^2_h}:=
\left[\sum_{\tau\in\T_h}\|w\|^2_{2,\tau}
+\sum_{e\in \E^0_h}\left(
h^{-1}_e\int_{e}\lbra \partial_n w\rbra^2\d s
+
h^{-3}_e\int_{e}\lbra w\rbra^2\d s\right)\right]^{1/2}.\hfill
\end{multline}
This inequality is useful for analysis of discontinuous Galerkin methods for Koiter shell.


In proving these discrete Korn's inequalities, a important tool is a compact embedding theorem 
in the space $H^1_h$, 
which is proved in Section~\ref{sec-compact}.
In Section~\ref{Korn-Naghdi} we prove the discrete Korn's inequalities \eqref{N-Korn-discrete} 
for the Naghdi shell, and 
Section~\ref{Korn-Koiter} is devoted Koiter shell model.
Throughout the paper, $C$ will be a generic constant that may depend on the domain $\Omega$, the mapping $\!Phi$ that defines the shell mid-surface, and shape
regularity $\K$ of a triangle, of a triangulation, or of a class of triangulations. But otherwise,
the constant is independent of triangulations. An integral $\int_\tau u(x_1, x_2)\d x_1 \d x_2$ or $\int_{\partial\Omega}v(s)\d s$ 
will be simply written as $\int_\tau u$ or $\int_{\partial\Omega}v$, in which
the integration variable and measure should be clear from the context.

\section{Compact embedding in the space of piecewise $H^1$ functions}
\label{sec-compact}
We need a discrete analogue of the  Rellich–-Kondrachov compact embedding theorem, which will play a fundamental role 
in proving the discrete Korn's inequalities for shells. 
There are several papers relevant to compact embedding in piecewise function spaces. 
In \cite{Feng}, such a result is stated under  the assumption that the triangulation $\T_h$ is quasi-uniform.
In \cite{Ern}, a theorem was proved for piecewise polynomials under the assumption that the maximum mesh size tends to zero.
In \cite{Buffa}, there is a sub-mesh condition to be verified.
Although these theories are developed in more general settings, their results do not readily meet our needs.
We only assume that $\T_h$ is a shape regular triangulation of the polygon $\Omega$. Our proof also clearly 
shows that the statement could break down when an interior angle of $\Omega$ tends to zero or $2\pi$.

\subsection{A trace theorem}\label{subsec-trace}
We first prove a trace theorem for functions in $H^1_h$.
This result itself is a generalization of a trace theorem of Sobolev space theory.
It will be used in proving the discrete compact embedding theorem, and in verifying the continuity 
of the seminorm denoted by $f$ in the right hand side of \eqref{N-Korn-discrete}, \eqref{K-Korn-discrete}, and
 \eqref{K-Korn-discrete-h-3}.
We will need the following trace theorem on an element \cite{A-DG}.
\begin{lem}
Let $\tau$ be a triangle, and $e$ one of its edges. Then there is a
constant $C$ depending on the shape regularity of $\tau$ such that
\begin{equation}\label{trace}
\int_eu^2\le C\left[h_e^{-1}\int_{\tau}u^2+h_e\sum_{\alpha=1}^2\int_{\tau}|\partial_\alpha u|^2\right]
\ \ \forall\ u\in H^1(\tau).
\end{equation}
\end{lem}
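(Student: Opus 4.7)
The proof plan is a standard divergence-theorem identity (Agmon's trick). I would choose the vector field $\!m(\!x)=\!x-\!x_0$, where $\!x_0$ denotes the vertex of $\tau$ opposite to the edge $e$. Three properties make this choice ideal: $\nabla\cdot\!m\equiv 2$; on each of the two edges of $\tau$ emanating from $\!x_0$ the field $\!m$ is tangent to the edge, so its normal component vanishes there; and on $e$ itself $\!m\cdot\!n$ equals the constant altitude $d_e=\operatorname{dist}(\!x_0,e)$.

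Applying the divergence theorem to $u^2\!m$ would yield the identity
\begin{equation*}
d_e\int_e u^2=2\int_\tau u^2+2\int_\tau u\,\!m\cdot\nabla u,
\end{equation*}
and since $|\!m|\le h_\tau$ on $\tau$, Cauchy--Schwarz followed by Young's inequality with parameter $h_e$ bounds the cross term by $\tfrac12 h_\tau\bigl(h_e^{-1}\|u\|_{0,\tau}^2+h_e\|\nabla u\|_{0,\tau}^2\bigr)$. Dividing the identity through by $d_e$ then produces an estimate whose right-hand side has prefactors $1/d_e$ and $h_\tau/d_e$ in front of the volume norms.

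The only place where shape regularity enters is in the final conversion of these prefactors. From $2|\tau|=h_e d_e$, together with the shape-regularity bounds $|\tau|\ge ch_\tau^2$ and $h_\tau\le Ch_e$, I would deduce $d_e\ge c'h_e$, hence $1/d_e\le C/h_e$ and $h_\tau/d_e\le C$; substituting these into the previous display gives \eqref{trace}. I do not anticipate a serious obstacle, since each step is elementary. An alternative approach would affinely map $\tau$ onto a fixed reference triangle and invoke the classical trace theorem there, tracking the Jacobian scaling; the divergence-theorem route is preferable because it produces the constant explicitly and makes the dependence on shape regularity entirely transparent.
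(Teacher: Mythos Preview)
The paper does not actually prove this lemma; it simply states the inequality and cites \cite{A-DG} for it. Your divergence-theorem argument with $\!m(\!x)=\!x-\!x_0$ is correct and self-contained, and in fact supplies what the paper omits. One trivial slip: after Cauchy--Schwarz and Young's inequality the bound on the cross term is $h_\tau\bigl(h_e^{-1}\|u\|_{0,\tau}^2+h_e\|\nabla u\|_{0,\tau}^2\bigr)$, not $\tfrac12 h_\tau(\cdots)$, but this is immaterial to the conclusion. Your alternative route via the reference element is closer in spirit to what one finds in \cite{A-DG}; the direct argument you chose is cleaner and, as you say, makes the shape-regularity dependence explicit.
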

\begin{thm}\label{tracetheorem}
Let $\T_h$ be a shape regular, but not necessarily quasi-uniform triangulation of $\Omega$. There
exists a constant $C$ such that
\begin{equation}\label{Omega-trace}
\|u\|_{L^2(\partial\Omega)}\le C \|u\|_{H^1_h}\ \ \forall\ u\in H^1_h.
\end{equation}
\end{thm}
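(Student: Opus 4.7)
The plan is to reduce the discrete trace inequality \eqref{Omega-trace} to the classical continuous trace theorem by means of a conforming piecewise linear approximation to $u$ built from its element means. First I would write $u|_\tau = \bar u_\tau + (u-\bar u_\tau)$ on each element, with $\bar u_\tau = |\tau|^{-1}\int_\tau u$. For a boundary edge $e_0\subset\partial\Omega$ in its adjacent element $\tau_0$, applying the local trace inequality \eqref{trace} together with the Poincar\'e inequality on $\tau_0$ gives
\[
\int_{e_0}(u-\bar u_{\tau_0})^2 \le C\bigl[h_{e_0}^{-1}\|u-\bar u_{\tau_0}\|^2_{0,\tau_0} + h_{e_0}|u|^2_{1,\tau_0}\bigr] \le C|u|^2_{1,\tau_0},
\]
using $h_{e_0}\sim h_{\tau_0}\le\operatorname{diam}\Omega$ and $\|u-\bar u_{\tau_0}\|^2_{0,\tau_0}\le Ch_{\tau_0}^2|u|^2_{1,\tau_0}$. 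Therefore $\int_{e_0}u^2 \le C|u|^2_{1,\tau_0} + 2|e_0|\bar u_{\tau_0}^2$, and after summing over boundary edges it remains to prove
\[
\|\bar u_h\|^2_{L^2(\partial\Omega)} := \sum_{e_0\subset\partial\Omega}|e_0|\bar u_{\tau_0}^2 \le C\|u\|^2_{H^1_h},
\]
where $\bar u_h$ is the piecewise constant function equal to $\bar u_\tau$ on each $\tau$.

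For this reduced problem, I would build a continuous piecewise linear $u^c$ on $\T_h$ whose nodal values are vertex averages of the element means,
\[
u^c(v) = n_v^{-1}\sum_{\tau\ni v}\bar u_\tau, \qquad n_v = \#\{\tau\in\T_h : v\in\overline{\tau}\},
\]
observing that $n_v$ is bounded by a constant depending only on the shape regularity $\K$. The continuous trace theorem then gives $\|u^c\|_{L^2(\partial\Omega)} \le C\|u^c\|_{H^1(\Omega)}$. On each $\tau$, $|u^c|^2_{1,\tau}$ reduces to a bounded linear combination of squared mean-differences $(\bar u_{\tau'} - \bar u_{\tau''})^2$ with $\tau', \tau''$ in the local patch around $\tau$, which I would estimate via the telescoping identity
\[
\bar u_{\tau_l} - \bar u_{\tau_r} = (\bar u_{\tau_l} - \bar u_e^l) + (\bar u_e^l - \bar u_e^r) + (\bar u_e^r - \bar u_{\tau_r})
\]
across a shared edge $e$ with edge-trace means $\bar u_e^{l,r}$: the lemma \eqref{trace} together with Poincar\'e bounds $(\bar u_\tau - \bar u_e^\tau)^2$ by $C|\tau|^{-1}|u|^2_{1,\tau}$, while the middle term equals $|e|^{-1}\int_e\lbra u\rbra$ and contributes the jump term in the $H^1_h$ norm. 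This yields $\|u^c\|^2_{H^1(\Omega)} \le C\|u\|^2_{H^1_h}$, and the boundary error $\|\bar u_h - u^c\|^2_{L^2(\partial\Omega)}$ is handled by the same telescoping restricted to the boundary strip.

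The principal difficulty is the combinatorial bookkeeping: ensuring the patch sizes and valences $n_v$ are uniformly bounded purely in terms of the shape regularity $\K$, so that when the patch estimates are summed over all $\tau$, each interior element and each interior edge is counted a uniformly bounded number of times and the resulting constant is indeed independent of $\T_h$. Extra care is required at boundary vertices, where the geometry of the local patches can degenerate; this is where the assumption that $\Omega$ is a polygon with interior angles bounded away from $0$ and $2\pi$ becomes essential, as anticipated in the introduction to this section.
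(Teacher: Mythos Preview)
Your approach is sound but follows a genuinely different route from the paper's. You reduce to the continuous trace theorem via a conforming piecewise-linear reconstruction built from vertex averages of element means, and then control both the $H^1$ norm of the reconstruction and the boundary error by telescoping mean differences across neighboring elements, which produces the element-gradient and edge-jump contributions of the $H^1_h$ norm. The paper instead argues directly: it fixes a piecewise smooth vector field $\!phi$ on $\Omega$ with $\!phi\cdot\!n=1$ on $\partial\Omega$ and continuous normal component across straight lines, applies the divergence theorem elementwise to $u^2\!phi$, and sums. The boundary contribution is exactly $\int_{\partial\Omega}u^2$; the interior-edge contributions are the jumps $\lbra u^2\!phi\rbra$, which by normal continuity of $\!phi$ reduce to $\lbra u^2\rbra(\!phi\cdot\!n)=2\lbra u\rbra\lbrac u\rbrac(\!phi\cdot\!n)$ and are then controlled by the local trace lemma~\eqref{trace}. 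Your route is constructive and reuses standard enriching-operator machinery; the paper's is shorter, avoids all patch combinatorics, and makes the domain dependence explicit through $|\!phi|_{0,\infty,\Omega}+|\div\!phi|_{0,\infty,\Omega}$. One minor correction: your worry about boundary-vertex patches degenerating is misplaced, since shape regularity alone already bounds the valences $n_v$; in your argument the polygon-angle dependence enters only through the constant in the continuous trace inequality applied to $u^c$.
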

\begin{proof}
Let $\!phi$ be a piecewise smooth
vector field on $\Omega$ whose normal component is continuous across any straight line segment, and
such that $\!phi\cdot\!n=1$ on $\partial\Omega$. 
On each element
$\tau\in\T_h$, we have
\begin{equation*}
\int_{\partial\tau}u^2\!phi\cdot\!n=\int_\tau \div(u^2\!phi)=
\int_\tau(2u\nabla u\cdot\!phi+u^2\div\!phi).
\end{equation*}
Here $\nabla u$ is the gradient of $u$. Summing up the above equations over all elements of $\T_h$, we get
\begin{equation*}
\int_{\partial\Omega}u^2=-\sum_{e\in\E^0_h}\int_e\lbra u^2\!phi\rbra
+\int_{\Omega_h}(2u\nabla u\cdot\!phi+u^2\div\!phi).
\end{equation*}
If $e$ is the border between the elements
$\tau_1$ and
$\tau_2$ with outward normals $\!n_1$ and $\!n_2$, then
$\lbra u^2\!phi\rbra=u^2_1\!phi_1\cdot\!n_1+u^2_2\!phi_2\cdot\!n_2$,
where $u_1$ and $u_2$ are restrictions of $u$
on $\tau_1$ and $\tau_2$, respectively.  It is noted that although $\!phi$ may be discontinuous
across $e$, it normal component is continuous, i.e., $\!phi_1\cdot\!n_1+\!phi_2\cdot\!n_2=0$.
On the edge $e$, we have
$|\lbra u^2\!phi\rbra|\le|\lbra u^2\rbra|\|\!phi\|_{0,\infty,\Omega}$.
Here, $|\lbra u^2\rbra|=|u_1^2-u_2^2|=2|\lbra u\rbra\lbrac u\rbrac|$,
with $\lbrac u\rbrac=(u_1+u_2)/2$ being the average. We have
\begin{multline}\label{jump-on-e}
\int_e|\lbra u^2\!phi\rbra|\le
2|\!phi|_{0,\infty,\Omega}
\left[h_e^{-1}\int_e\lbra u\rbra^2\right]^{1/2}\left[h_e\int_e\lbrac u\rbrac^2\right]^{1/2}\\
\le C
|\!phi|_{0,\infty,\Omega}\left[h_e^{-1}\int_e\lbra u\rbra^2\right]^{1/2}
\left[\int_{\delta e}u^2+h_e^2\int_{\delta e}|\nabla u|^2\right]^{1/2}.
\end{multline}
Here, $|\!phi|_{0,\infty,\Omega}$ is the Sobolev norm in the space $[W^{0,\infty}(\Omega)]^2$ and $C$ only depends on the shape regularity of $\tau_1$ and $\tau_2$.
We used $\delta e=\tau_1\cup\tau_2$ to denote the ``co-boundary''
of edge $e$,
and we used the trace
estimate \eqref{trace}.
It then follows from the Cauchy--Schwarz inequality that
\begin{equation*}
\|u\|^2_{L^2(\partial\Omega)}\le C(|\!phi|_{0,\infty,\Omega}+|\div\!phi|_{0,\infty,\Omega})
\left[\|u\|^2_{L^2}+\int_{\Omega_h}|\nabla u|^2
+\sum_{e\in\E^0_h}\frac{1}{|e|}\int_e\lbra u\rbra^2\right].
\end{equation*}
Here the constant $C$ only depends on the shape regularity of $\T_h$.
The dependence on $\Omega$ of the $C$ in \eqref{Omega-trace}
is hidden in the $\!phi$ in the above inequality.
\end{proof}
%
%

\subsection{Compact embedding in $H^1_h$}
\label{subsec-compact}
For $\delta>0$, we define a boundary strip $\Omega_\delta$ of width $\mathcal{O}(\delta)$ for the domain $\Omega$.
Let $\Omega_\delta^0=\Omega\setminus \overline{\Omega_\delta}$ be the interior domain.
The interior domain $\Omega^0_\delta$ has the property that if a point is in $\Omega^0_\delta$, then the
disk centered at the point with radius $\delta$ entirely lies in $\Omega$.
We first show that when the strip is thin, the $L^2(\Omega_\delta)$
norm of the restriction on $\Omega_\delta$ of a function in $H^1_h$ must be small.
\begin{lem}\label{bd-shift-theorem}
There is a constant $C$ such that
\begin{equation}\label{stripe}
\int_{\Omega_{\delta}}u^2\le C\delta\|u\|^2_{H^1_h}\ \ \forall\ u\in H^1_h.
\end{equation}
Here $\Omega_\delta$ is a boundary strip of width $\delta$ attached to $\partial\Omega$.
\end{lem}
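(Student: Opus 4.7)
The plan is to adapt the integration-by-parts argument used in Theorem~\ref{tracetheorem}, replacing the boundary-capturing vector field $\!phi$ there by a strip-localized version $\!phi_\delta$. Specifically, I will construct a piecewise smooth vector field $\!phi_\delta$ on $\Omega$ whose normal component is continuous across any straight line segment and which satisfies
\begin{equation*}
|\!phi_\delta|\le C\delta,\quad |\div \!phi_\delta|\le C,\quad \!phi_\delta\cdot\!n=\delta \text{ on } \partial\Omega,\quad -\div \!phi_\delta \ge \tfrac12 \text{ on } \Omega_\delta,
\end{equation*}
with $\!phi_\delta$ vanishing outside a strip of width $O(\delta)$. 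Away from the vertices of $\Omega$, take $\!phi_\delta(x) = (\delta - d(x))^+ \!n(x)$, where $d(x)$ is the distance to $\partial\Omega$ and $\!n(x)$ is the outward normal extended smoothly near $\partial\Omega$. Near each vertex, split the corner region along the interior angle bisector (a straight segment) into two wedges and, on each wedge, take $\!phi_\delta$ to be the constant outward normal of the adjoining edge times the appropriate cutoff in $d$. Symmetry across the bisector makes the two normal components match on it, so $\!phi_\delta$ is admissible in the sense of Theorem~\ref{tracetheorem}. The constants depend only on the minimum interior angle of $\Omega$, which is why the estimate must degenerate when an interior angle tends to $0$ or $2\pi$.

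Applying the divergence theorem to $u^2\!phi_\delta$ on each element and summing, then using the continuity of $\!phi_\delta\cdot\!n$ to collapse the interior-edge contributions, I obtain, just as in the proof of Theorem~\ref{tracetheorem},
\begin{equation*}
\delta\int_{\partial\Omega}u^2-\sum_{e\in\E^0_h}\int_e\lbra u^2\!phi_\delta\rbra=\int_{\Omega_h}\bigl(2u\nabla u\cdot\!phi_\delta+u^2\div\!phi_\delta\bigr).
\end{equation*}
By property (iv) of $\!phi_\delta$, $-\int_{\Omega_h}u^2\div\!phi_\delta\ge \tfrac12\int_{\Omega_\delta}u^2$ (the contribution of the transition strip where $\!phi_\delta\neq 0$ but $x\notin\Omega_\delta$ is of lower order and absorbable), so rearranging gives
\begin{equation*}
\tfrac12\int_{\Omega_\delta}u^2\le \delta\int_{\partial\Omega}u^2+\Bigl|\sum_{e\in\E^0_h}\int_e\lbra u^2\!phi_\delta\rbra\Bigr|+\Bigl|\int_{\Omega_h}2u\nabla u\cdot\!phi_\delta\Bigr|.
\end{equation*}

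Each of the three terms on the right is at most $C\delta\|u\|^2_{H^1_h}$. The boundary term is handled directly by Theorem~\ref{tracetheorem}. For the volume term, $|\!phi_\delta|\le C\delta$, so Cauchy--Schwarz together with Young's inequality absorbs a small multiple of $\int_{\Omega_\delta}u^2$ back into the left side while leaving a remainder bounded by $C\delta^2\|\nabla u\|_{L^2}^2\le C\delta\|u\|^2_{H^1_h}$. For the jump term, I would mimic estimate~\eqref{jump-on-e} verbatim, with $|\!phi_\delta|_{0,\infty,\Omega}\le C\delta$ replacing $|\!phi|_{0,\infty,\Omega}$; a Cauchy--Schwarz across edges produces the desired extra factor of $\delta$.

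The main obstacle is Step~1: the construction of a piecewise smooth vector field on a general polygon with the prescribed normal trace, divergence bounded below on $\Omega_\delta$, pointwise size $O(\delta)$, divergence of order one, and continuous normal component across its interfaces of definition. Once $\!phi_\delta$ is in hand, Steps~2 and 3 are essentially the computation of Theorem~\ref{tracetheorem} with each factor carrying the extra $\delta$.
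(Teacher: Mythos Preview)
Your overall strategy coincides with the paper's: construct a vector field supported in an $O(\delta)$ boundary strip with $|\!phi|\le C\delta$, continuous normal component across its interfaces, and divergence essentially equal to $1$ on the strip, then repeat the integration-by-parts and jump estimate of Theorem~\ref{tracetheorem}. The part you correctly flag as the obstacle --- the corner construction --- is where your proposal actually fails. Your bisector patching does \emph{not} yield a continuous normal component: by the very reflection symmetry you invoke, the outward normals $\!n_1,\!n_2$ to the two meeting edges are mirror images across the bisector, so if $\!nu$ is a unit normal to the bisector then $\!n_1\cdot\!nu=-\!n_2\cdot\!nu$. On the bisector $d_1=d_2$, hence the two one-sided traces of $\!phi_\delta\cdot\!nu$ equal $\pm(\delta-d)^+(\!n_1\cdot\!nu)$ and are opposite, not equal. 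That jump lives on a segment which is not a mesh edge, so the elementwise divergence theorem picks up an extra line integral of $u^2$ over it that you have no mechanism to control. (It is the \emph{tangential} components that match by symmetry.)

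There is also a sign inconsistency: with $\!phi_\delta=(\delta-d)^+\!n$ one computes $\div\!phi_\delta=+1$ on the strip, contradicting your property~(iv); once the signs are realigned the bookkeeping works, so this is cosmetic. The paper resolves the substantive corner issue with a different explicit construction. Along each straight piece of $\partial\Omega$ it uses the rectangular field $\langle x,0\rangle$; at a convex vertex it inserts a wedge region (vertex at the intersection of the two inner strip edges) carrying the radial field $\tfrac12\langle x,y\rangle$; at a concave vertex it uses a circular sector carrying $\tfrac12(1-\rho^2/r^2)\langle x,y\rangle$. Each of these has $\div=1$ and vanishing normal component on its lateral sides, so the pieces glue with continuous normal flux; the assembled $\!phi$ then satisfies $\div\!phi=1$ on $\Omega_\delta$, $\!phi=0$ on the inner boundary, and $|\!phi|_{0,\infty}\le C\delta$ with $C$ depending only on the interior and exterior angles of $\Omega$. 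With this $\!phi$ in hand, Steps~2 and~3 of your outline go through exactly as you describe.
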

\begin{proof}
We choose a piecewise smooth a vector field $\!phi$ whose normal component is continuous
across any curve such that $\!phi=0$ on the inner boundary of $\Omega_{\delta}$, and
$\div\!phi=1$ and $|\!phi|\le C\delta$ on $\Omega_{\delta}$.
(A construction of such $\!phi$ is given below.)
We then extend $\!phi$ by zero onto the entire domain $\Omega$. The extended, still denoted by $\!phi$,
is a piecewise smooth vector field whose normal components
is continuous over any curve in $\Omega$.
We thus have
\begin{equation*} 
\int_{\Omega_{\delta}}u^2=
\int_{\Omega}u^2\div\!phi
=\sum_{\tau\in\T_h}\int_{\tau}u^2\div\!phi=
-\sum_{\tau\in\T_h}\int_{\tau}2u\nabla u\cdot\!phi+
\sum_{\tau\in\T_h}\int_{\partial\tau}u^2\!phi\cdot\!n.
\end{equation*}
The last term can be written as
\begin{equation*}
\sum_{\tau\in\T_h}\int_{\partial\tau}u^2\!phi\cdot\!n=
\int_{\partial\Omega}u^2\!phi\cdot\!n+\sum_{e\in\E^0_h}\int_e\lbra u^2\!phi\rbra.
\end{equation*}
Since the normal components of $\!phi$ is continuous
on edges in $\E^0_h$, we use the same argument as in the proof of Theorem~\ref{tracetheorem}, cf., \eqref{jump-on-e},
to get
\begin{equation*}
\sum_{e\in\E^0_h}\int_e|\lbra u^2\!phi\rbra|\le C
|\!phi|_{0,\infty,\Omega}\left[\|u\|^2_{L^2}+\sum_{\tau\in\T_h}\int_{\tau}h^2_\tau|\nabla u|^2
+\sum_{e\in\E^0_h}\frac{1}{h_e}\int_e\lbra u\rbra^2\right].
\end{equation*}
It then follows from Theorem~\ref{tracetheorem} that
\begin{equation} \label{bd-shift}
\int_{\Omega_{\delta}}u^2\le C
|\!phi|_{0,\infty,\Omega}\left[\|u\|^2_{L^2}+\int_{\Omega_h}|\nabla u|^2
+\sum_{e\in\E^0_h}\frac{1}{h_e}\int_e\lbra u\rbra^2\right].
\end{equation}
The proof is complete since $|\!phi|_{0,\infty, \Omega}\le C\delta$.
The constant $C$ depends on $\Omega$ in terms of its interior angles and exterior angles at convex and
concave vertexes, respectively.
\end{proof}

We describe a way to choose the boundary strip and construct the vector field $\!phi$ that was
used in the proof.
This field can be constructed by piecing together several special vector fields.
We need some vector fields on rectangles, wedges, and circular disks.
On the $xy$-plane, we consider the vertical rectangular strip $R=(0,\delta)\x (0, l)$.
On this strip, we consider $\!psi_R=\langle x, 0\rangle$. This vector
field satisfies the condition that $\div\!psi_R=1$, $\!psi_R=0$ on the left side,
$\!psi_R\cdot\!n=0$ on the top and bottom sides and the maximum of $|\!psi_R|$ is $\delta$ that is attained
on the right side.
On a wedge $W$ with its vertex at the origin, we consider the vector field
$\!psi_W=\langle x, y\rangle/2$.
This $\!psi_W$ satisfies the conditions that
$\div\!psi_W=1$, $\!psi_W\cdot\!n=0$ on the two sides of $W$, and $|\!psi_W|=\rho/2$
at a point in $W$ whose distant from the origin is $\rho$.
On a circular disk $C$ centered at the origin and of radius $\rho$, we consider the vector field
$\!psi_C=(1-\rho^2/r^2)\langle x, y\rangle/2$. Here $r=(x^2+y^2)^{1/2}$.
This vector field satisfies the condition that $\div\!psi_C=1$ on the disk except at the center
where it is singular. It points toward the center  every where. And it is zero on the boundary.
We use this field on a sector of the circle $C$, on the two radial sides of which
we have $\!psi_C\cdot\!n=0$.
\begin{figure}[!ht]
\centerline{\input{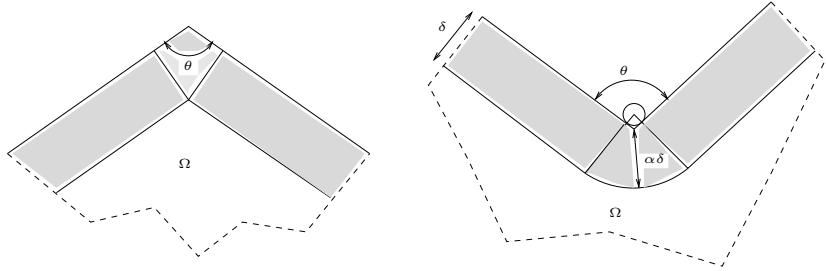}}
\caption{\label{boundarystrip}Boundary strip $\Omega_\delta$ near a convex vertex (left)
and a concave vertex (right).}
\end{figure}
With these special vector fields, we can then assemble the $\!phi$ on a boundary strip
$\Omega_\delta$. Along the interior side
of each straight segment of
$\partial\Omega$ we choose a uniform strip of thickness $\delta$. These strips overlap near
vertexes. If $\Omega$ is convex at a vertex, we introduce a wedge whose vertex is
at the intersection of the interior boundary of the uniform strips, and whose sides are orthogonal
to the meeting straight segments, see the left figure of Figure~\ref{boundarystrip}.
If $\Omega$ is concave at a vertex, we resolve it by using a circular sector, centered near the vertex
and outside of $\Omega$. The radius of the circle is slightly bigger than $\delta$
such that the arc is continuously connected to the
interior edges of the meeting strips, and the two radial sides are orthogonal to the meeting boundary
segments, see the right figure in Figure~\ref{boundarystrip}.
With such treatment of the vertices, the boundary strip $\Omega_\delta$
is composed of
rectangular strips attaching to major portions of straight segments of
$\partial\Omega$, portion of wedges at convex vertexes, and portion of circular sectors
at concave vertexes, see the shaded region in Figure~\ref{boundarystrip}.
We then transform $\!psi_R$, $\!psi_W$, and $\!psi_C$ to various parts of $\Omega_\delta$
and assemble a $\!phi$
on $\Omega_\delta$. The vector field $\!phi$ thus constructed is zero on the interior boundary
of $\Omega_\delta$. Its normal components are continuous across any curve, and
$\div\!phi=1$ on $\Omega_\delta$.
The thickness of $\Omega_\delta$ is the constant $\delta$ for the rectangular part.
It is maximized to $\delta/\sin\frac{\theta}2$ at a convex vertex. It is minimized to $\alpha\delta$
at the concave vertex, with $0<\alpha<1$, a value can be chosen as, for example, $1/2$. The norm $|\!phi|$
has a maximum $\delta/\sin\frac{\theta}2$ at a convex vertex with $\theta$ being the interior angle.
Thus when $\theta$ is small, $|\!phi|$ is big, and the estimate
\eqref{bd-shift} deteriorates.
The norm $|\!phi|$ also has a local maximum near a concave vertex.
It is bounded as
\begin{equation*}
|\!phi|\le\delta\frac{1-\alpha\sin\frac{\theta}{2}}{(1-\alpha)\sin\frac{\theta}{2}}.
\end{equation*}
When the exterior angle is small this maximum would be  big.
Also, one needs to choose $\alpha$ away from $1$ and $0$,
to maintain a moderate thickness of the strip and a reasonable bound for $|\!phi|$
which affect the estimate \eqref{bd-shift}.
We remark that the constant $C$ in the estimate \eqref{stripe} could tend to infinity if 
an interior angle of the polygonal domain $\Omega$ tends to zero or $2\pi$.

We then prove that functions in $H^1_h$ are ``shift-continuous'' in $L^2$, as
stated in the following lemma. We extend a function $u\in H^1_h$
to a function $\tilde u$ on the whole $\RR^2$ by zero.
\begin{lem}\label{shift-continuity}
There is a constant $C$  such that
\begin{equation}\label{whole-shift}
\int_{\RR^2}[\tilde u(\!x+\!rho)-\tilde u(\!x)]^2\mathrm{d}\!x\le C|\!rho|\|u\|^2_{H^1_h}\ \ \forall\ u\in H^1_h.
\end{equation}
\end{lem}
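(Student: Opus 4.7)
The plan is to extend $u$ by zero to $\R^2$, split $\R^2$ according to whether the shift segment $[\!x,\!x+\!rho]$ remains inside $\overline\Omega$, and reduce each piece either to Lemma~\ref{bd-shift-theorem} or to a direct analysis of the jumps across the interior edges of $\T_h$. If $|\!rho|\ge\operatorname{diam}(\Omega)$ then $\tilde u$ and $\tilde u(\cdot+\!rho)$ have disjoint supports and
\[
\int_{\R^2}v^2\le 2\|u\|_{L^2(\Omega)}^2\le \frac{2}{\operatorname{diam}(\Omega)}\,|\!rho|\,\|u\|_{H^1_h}^2,
\]
so I may assume $|\!rho|\le\operatorname{diam}(\Omega)$. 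For those $\!x$ with $\!x\notin\overline\Omega$ or $\!x+\!rho\notin\overline\Omega$, one of the two summands in $v=\tilde u(\cdot+\!rho)-\tilde u(\cdot)$ vanishes and the other is supported in the $|\!rho|$-boundary strip $\Omega_{|\!rho|}$; Lemma~\ref{bd-shift-theorem} with $\delta=|\!rho|$ bounds this contribution by $C|\!rho|\,\|u\|_{H^1_h}^2$.

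On $A=\{\!x:[\!x,\!x+\!rho]\subset\Omega\}$ the segment is cut by interior edges of $\T_h$ at times $0<t_1<\dots<t_{k(\!x)}<1$, and I decompose
\[
v(\!x)=\int_0^1\nabla u(\!x+t\!rho)\cdot\!rho\,dt+\sum_{i=1}^{k(\!x)}\epsilon_i\lbra u\rbra(\!x+t_i\!rho),
\]
with $\epsilon_i=\pm 1$ recording the direction of traversal. Squaring, using $(A+B)^2\le 2A^2+2B^2$, Cauchy--Schwarz in $t$, and Fubini yields for the smooth part
\[
\int_A\Bigl|\int_0^1\nabla u(\!x+t\!rho)\cdot\!rho\,dt\Bigr|^2 d\!x\le|\!rho|^2\|\nabla u\|_{L^2(\Omega)}^2\le C|\!rho|\,\|u\|_{H^1_h}^2,
\]
again because $|\!rho|\le\operatorname{diam}(\Omega)$.

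For the jump part I work edge by edge. For each $e\in\E^0_h$ the set of $\!x\in\R^2$ whose shift segment crosses $e$ is the parallelogram $P_e=\{\!y-t\!rho:\!y\in e,\ t\in[0,1]\}$, and under the change of variables $(\!y,t)\mapsto\!y-t\!rho$ the Jacobian is $|\!tau_e\times\!rho|\le|\!rho|$, so
\[
\int_{P_e}|\lbra u\rbra(\text{crossing point})|^2\,d\!x=|\!tau_e\times\!rho|\int_e\lbra u\rbra^2\le|\!rho|\int_e\lbra u\rbra^2.
\]
If every shift segment met at most one interior edge, summing over $e$ together with $\sum_{e\in\E^0_h}\int_e\lbra u\rbra^2\le h_{\max}\|u\|_{H^1_h}^2\le\operatorname{diam}(\Omega)\,\|u\|_{H^1_h}^2$ would immediately close the estimate.

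The main obstacle is that for a merely shape-regular (not quasi-uniform) triangulation the segment $[\!x,\!x+\!rho]$ can cross arbitrarily many interior edges, so that the naive bound $(\sum_i\epsilon_i\lbra u\rbra_i)^2\le k(\!x)\sum_i\lbra u\rbra_i^2$ loses an unbounded factor $k(\!x)$. I would overcome this by a weighted Cauchy--Schwarz inequality, assigning to the $i$-th crossing a weight $w_i>0$ comparable (via shape regularity) to the length of the sub-segment of $[\!x,\!x+\!rho]$ inside an adjacent element on the outgoing side of $e_i$; since $\sum_i w_i\le|\!rho|$ one obtains $(\sum_i\epsilon_i\lbra u\rbra_i)^2\le|\!rho|\sum_i\lbra u\rbra_i^2/w_i$. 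Shape regularity of $\T_h$ ensures that, after the change of variables $(\!y,t)\mapsto\!y-t\!rho$, the factor $1/w_i$ is pointwise controlled (up to a constant depending on $\K$) by $1/|\!tau_e\times\!rho|$, which cancels the Jacobian and produces the bound $C|\!rho|\sum_{e\in\E^0_h}\int_e\lbra u\rbra^2\le C|\!rho|\,\|u\|_{H^1_h}^2$. Combining this with the smooth-part and boundary-strip contributions yields \eqref{whole-shift}.
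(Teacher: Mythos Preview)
Your overall strategy matches the paper's, and the boundary-strip and gradient pieces are correct. The genuine gap is in your choice of weights for the jump sum.

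The pointwise claim $1/w_i\le C_\K/|\!tau_{e_i}\times\!rho|$ is false. Take $\!rho$ horizontal and let it cross a vertical edge $e$ at distance $\epsilon$ from an endpoint $V$; in the outgoing triangle the segment exits through the other edge issuing from $V$ after traveling only a distance of order $\epsilon$ (specifically $\epsilon\tan\alpha$, with $\alpha$ the interior angle at $V$), so $w_i=O(\epsilon)$ while $|\!tau_e\times\!rho|=|\!rho|$ is fixed. Letting $\epsilon\to0$ destroys the bound. The defect is not integrable either: after your change of variables the contribution of $e$ becomes $|\!tau_e\times\!rho|\int_e\lbra u\rbra^2(\!y)/w(\!y)\,ds(\!y)$, and since $w(\!y)$ vanishes linearly at each endpoint of $e$, the factor $1/w$ is not in $L^1(e)$, so there is no control against a generic $\lbra u\rbra^2\in L^1(e)$.

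Your weighted Cauchy--Schwarz framework does work, but with the weights $w_i=h_{e_i}$ instead of the sub-segment lengths; this is precisely what the paper does. The required bound $\sum_i w_i\le C$ is then the purely geometric statement that on a shape-regular triangulation the total length of all interior edges crossed by any straight segment in $\Omega$ is bounded by a constant depending only on $\K$ and $\operatorname{diam}\Omega$. This is the missing key lemma; the paper proves it by trimming the set of crossed edges to a zigzag polygonal path along the line and comparing each consecutive pair of legs with the corresponding chord via the law of cosines and the minimum-angle condition. With $\sum_{e\cap[\!x,\!x+\!rho]\ne\emptyset}h_e\le C$ in hand, your parallelogram/Jacobian computation gives exactly $C|\!rho|\sum_{e\in\E^0_h}h_e^{-1}\int_e\lbra u\rbra^2$, which is the jump part of $\|u\|_{H^1_h}^2$.
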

\begin{proof}
Because for an element $\tau\in\T_h$, smooth functions are dense in $H^1(\tau)$, we only need
to prove \eqref{whole-shift}
for functions that are smooth on each element of $\T_h$. Let $u$ be such a piecewise smooth function.
Let $\!rho$ be an arbitrary short vector. We take $\delta=|\!rho|$ and choose a boundary strip $\Omega_\delta$.
The interior part $\Omega^0_{\delta}$ of the domain has the property that
if $\!x\in \Omega^0_{\delta}$ then the line segment $[\!x, \!x+\!rho]\subset \Omega$. We have
\begin{equation*}
\int_{\RR^2}[\tilde u(\!x+\!rho)-\tilde u(\!x)]^2\mathrm{d}\!x
=\int_{\Omega_\delta^0}[u(\!x+\!rho)-u(\!x)]^2\mathrm{d}\!x
+\int_{\RR^2\setminus\Omega_\delta^0}[\tilde u(\!x+\!rho)-\tilde u(\!x)]^2\mathrm{d}\!x.
\end{equation*}
Using Lemma~\ref{bd-shift-theorem}, we bound the second term as
\begin{equation}\label{bd-strip-est}
\int_{\RR^2\setminus\Omega_\delta^0}[\tilde u(\!x+\!rho)-\tilde u(\!x)]^2\mathrm{d}\!x\le
\int_{\Omega_{2\delta}}u^2(\!x)\mathrm{d}\!x\le C|\rho|\|u\|^2_{H^1_h}.
\end{equation}
We then
focus on the first term.
This integral can be taken on an equal measure subset of
$\Omega^0_\delta$. This subset is obtained by removing a zero measure subset that
is composed of such point $\!x$:
$\!x$ or $\!x+\!rho$ is on an open edge $e\in\E^0_h$,
or the closed straight line segment $[\!x, \!x+\!rho]$ connecting $\!x$ and $\!x+\!rho$ contains any vertex of $\T_h$,
or $[\!x, \!x+\!rho]$ overlaps some edges of $\E^0_h$.
By such exclusion, for any $\!x$ in the remaining set, both the ends of $[\!x, \!x+\!rho]$
are in the interior of some open triangular elements, and $[\!x, \!x+\!rho]$ contains no
vertex. The restriction of $u$ on $[\!x, \!x+\!rho]$ is a piecewise smooth one dimensional function,
which may have
a finite number of jumping points in the open straight line segment $(\!x, \!x+\!rho)$.
By the fundamental theorem
of calculus, we have
\begin{equation*}
u(\!x+\!rho)-u(\!x)=\int_0^1\nabla u(\!x+t\!rho)\cdot\!rho \mathrm{d}t+\sum_{p\in
[\!x, \!x+\!rho]\cap\E^0_h}\lbra u\rbra_p
\end{equation*}
Note that the integrand in the integral may make no sense at $t$, if
$\!x+t\!rho\in\E^0_h$, where $u$ may jump. These points are excluded from the integration, where the
jumping effect is resolved in the second term.
On the segment $[\!x, \!x+\!rho]$, $u$ may have a jump at $p\in [\!x, \!x+\!rho]\cap\E^0_h$, which is denoted by
$\lbra u\rbra_p$ that is the value of $u$ from the side of $\!x$ minus that from the side of $\!x+\!rho$.
We thus have
\begin{equation*}
[u(\!x+\!rho)-u(\!x)]^2\le
|\!rho|^2\int_0^1|\nabla u(\!x+t\!rho)|^2\mathrm{d}t+\left[\sum_{p\in[\!x, \!x+\!rho]\cap\E^0_h}\lbra u\rbra_p\right]^2.
\end{equation*}
When we take integral on $\Omega^0_\delta$ (minus the aforementioned zero-measure subset),
the first term is bounded as follows.
\begin{equation*}
\int_{\Omega^0_{\delta}}
|\!rho|^2\int_0^1|\nabla u(\!x+t\!rho)|^2\mathrm{d}t\mathrm{d}\!x=
|\!rho|^2\int_0^1\int_{\Omega^0_{\delta}}|\nabla u(\!x+t\!rho)|^2\mathrm{d}\!x\mathrm{d}t\\
\le
|\!rho|^2\int_{\Omega_h}|\nabla u|^2\mathrm{d}\!x.
\end{equation*}

To estimate the jumping related second term, we write $\lbra u\rbra_p=h_e^{1/2}h_e^{-1/2}\lbra u\rbra_p$
if $p\in [\!x, \!x+\!rho]\cap \E^0_h$ is on the edge $e$, and
use the Cauchy-Schwarz inequality to obtain the following estimate.
\begin{equation*}
\left[\sum_{p\in[\!x, \!x+\!rho]\cap\E^0_h}\lbra u\rbra_p\right]^2\le
\left[\sum_{e\cap[\!x, \!x+\!rho]\ne\emptyset}h_e^{-1}\lbra u\rbra^2_{e\cap[\!x, \!x+\!rho]}\right]
\left[\sum_{e\cap[\!x, \!x+\!rho]\ne\emptyset}h_e\right].
\end{equation*}
We show below that
there is a $C$, depending on the domain $\Omega$ and the
shape regularity $\K$ of $\T_h$ , such that
\begin{equation}\label{zigzag}
\sum_{e\cap[\!x, \!x+\!rho]\ne\emptyset}h_e\le C.
\end{equation}
We then have
\begin{equation*}
\int_{\Omega^0_{\delta}}\left[\sum_{p\in
[\!x, \!x+\!rho]\cap\E^0_h}\lbra u\rbra_p\right]^2\le
C\int_{\Omega^0_{\delta}}
\sum_{e\cap[\!x, \!x+\!rho]\ne\emptyset}h_e^{-1}\lbra u\rbra^2_{e\cap[\!x, \!x+\!rho]}\mathrm{d}\!x.
\end{equation*}
Every term in the right hand side is associated with a particular edge $e\in \E^0_h$. Each edge
$e\in\E^0_h$ is relevant to at most the points
in the parallelogram $\Omega_e$ in the Figure~\ref{shift}.
Thus, by changing the order of sum and integral, we have
\begin{multline*}
\int_{\Omega^0_{\delta}}
\sum_{e\cap[\!x, \!x+\!rho]\ne\emptyset}h_e^{-1}\lbra u\rbra^2_{e\cap[\!x, \!x+\!rho]}\mathrm{d}\!x
\le
\sum_{e\in\E^0_h}h_e^{-1}\int_{\Omega_e}\lbra u\rbra^2_{e\cap[\!x, \!x+\!rho]}\mathrm{d}\!x\\
\le
\sum_{e\in\E^0_h}h_e^{-1}\sin\langle\!rho, e\rangle|\!rho|\int_{e}\lbra u\rbra^2
\le
|\rho|\sum_{e\in\E^0_h}h_e^{-1}\int_{e}\lbra u\rbra^2.
\end{multline*}
Here, $\langle\!rho, e\rangle$ is the angle between the vector $\!rho$ and the edge $e$.
\begin{figure}[!ht]
\centerline{\input{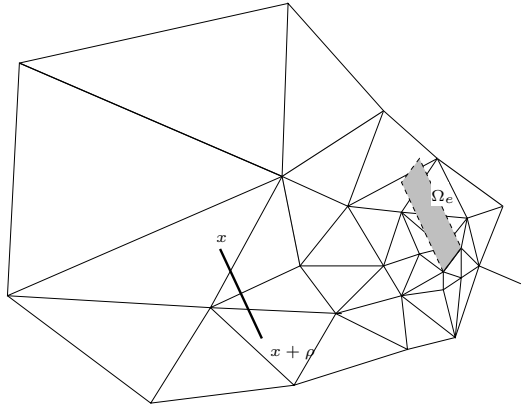}}
\caption{\label{shift}
A $\!rho$ shift and $\Omega_e$ for an edge $e$.}
\end{figure}
Therefore, we have
\begin{equation*} 
\int_{\Omega^0_{\delta}}[u(\!x+\!rho)-u(\!x)]^2d\!x\le
|\!rho|^2|\nabla u|^2_{0, \Omega_h}+|\!rho|\sum_{e\in\E^0_h}h_e^{-1}\int_e\lbra u\rbra^2.
\end{equation*}
Note that the second term may carry the smaller coefficient $|\!rho|\max\{h,|\!rho|\}$
such that the two terms
are closer in the order. But we do not need such refined estimates.
We thus proved
\begin{equation}\label{interior-shift}
\int_{\Omega^0_{\delta}}[u(\!x+\!rho)-u(\!x)]^2\mathrm{d}\!x\le C|\!rho|\|u\|^2_{H^1_h}\ \ \forall\ u\in H^1_h.
\end{equation}
The shift continuity \eqref{whole-shift} then follows from \eqref{interior-shift} and  \eqref{bd-strip-est}.
We have shown that the set of zero extended functions is shift continuous
in $L^2(\RR^2)$.
\end{proof}
We give a proof for the estimate \eqref{zigzag}.
Let $l$ be a straight line cutting through $\Omega$. Let $\T_h$ be a shape regular triangulation
with regularity constant $\K$. Then the sum of lengths of mesh line segments
intersecting $l$ is bounded independent of the triangulation. More specifically,
we prove that there is a constant $C$, depending on the shape regularity $\K$,
but otherwise independent of the triangulation $\T_h$
such that
\begin{equation}\label{zigzag1}
\sum_{e\in\E_h\text{ and }e\cap l\ne\emptyset}h_e\le C.
\end{equation}
We shall use some facts that follow from the shape regularity assumption.
There is a minimum angle $\theta_\K$ among all angles of triangles of $\T_h$. The number of edges
sharing a vertex is bounded by a constant $C$ that only depends on $\K$.
Let $e_1$ and $e_2$ be two edges
sharing a
vertex. There are constants $C_1$ and $C_2$ depending on $\K$ such that
$h_{e_1}\le C_1 h_{e_2}$ and $h_{e_2}\le C_2 h_{e_1}$.
Without loss of generality, we assume $l$ is horizontal. We also
assume that $l$ does not pass any vertex. (This assumption was already made in the context of \eqref{zigzag}.
It can be removed by a slight modification
of the following argument.)
\begin{figure}[!ht]
\centerline{\input{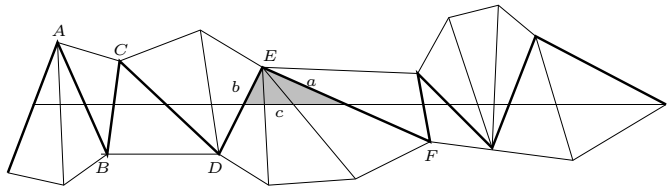}}
\caption{\label{cut-tri}A line cutting through the triangulation.}
\end{figure}
We first trim the set of intersecting edges  $\{e\in\E_h\text{ and }e\cap l\ne\emptyset\}$ to simplify the set without
significantly reducing its sum of lengths of all edges.
Consider the left most edge intersecting $l$,  of which only one end vertex
is shared by some other edges intersecting $l$. Let $A$ be this end vertex, and assume it
is above $l$. We examine all the edges intersecting $l$ and sharing $A$ in the counterclockwise order.
We discard all such edges but the last one that is $AB$ in Figure~\ref{cut-tri}.
(The next edge sharing $A$, as $AC$, does not intersect $l$.) The edge $BC$ must intersect $l$.
There could be other edges intersecting $l$ and sharing the vertex $B$.
We examine all the edges sharing $B$ and intersecting $l$ in the clockwise order. We discard
all but the last one. (It is $BC$ in the figure.) Now the vertex $C$ is in the same situation as $A$, and
we can determine the edge $CD$ using the same rule as for $AB$. Then we determine $DE$, $EF$, and so forth.
We do the trimming all the way
to the right end of $l$. This procedure touches every edge
intersecting $l$, by either trimming an edge off or keeping it. Every edge deleted has at least one vertex-sharing
edge retained.
The remaining edges constitute a continuous piecewise straight path
as represented by the thick line in the figure. We denote this set by $\E^l_h$.
It follows from the aforementioned facts about the shape regular triangulation that there is
a constant $C$, depending on $\K$ only, such that
\begin{equation*} 
\sum_{e\in\E_h\text{ and }e\cap l\ne\emptyset}h_e\le C\sum_{e\in\E^l_h}h_e.
\end{equation*}
We consider a typical triangle bounded by $l$ and $\E^l_h$, as the shaded one
in the figure, whose sides are $a$, $b$, and $c$. Let the angle $\angle DEF$ be denoted by $\theta$.
Then $\theta\ge\theta_\K$. Note that
$c^2=a^2+b^2-2ab\cos\theta$. If $\theta$ is obtuse, then $a+b\le \sqrt 2 c$.
Otherwise, we have $c^2=(a^2+b^2)(1-\cos\theta)+(a-b)^2\cos\theta\le (a^2+b^2)(1-\cos\theta)$. Thus
$a+b\le\sqrt{\frac2{1-\cos\theta}}c$. In any case, we have $a+b\le\sqrt{\frac2{1-\cos\theta_\K}}c$.
We thus proved
\begin{equation*}
\sum_{e\in\E^l_h\cap\E^0_h}h_e\le\sqrt{\frac2{1-\cos\theta_\K}}|l\cap\Omega|.
\end{equation*}
From this, \eqref{zigzag1} follows. Here $|l\cap\Omega|$ is the length of the line segment $l\cap\Omega$ which does exceed the diameter of $\Omega$.
We can now prove the following compact embedding theorem.
\begin{thm}\label{uniformcompactembedding}
Let $\T_{h_i}$ be a (infinite) class of shape regular but not necessarily quasi-uniform
triangulations of the polygonal domain $\Omega$, with a shape regularity constant $\K$.
For each $i$, let $H^1_{h_i}$ be the space of piecewise $H^1$ functions, subordinated to the
triangulation $T_{h_i}$,
equipped with the norm \eqref{H1hnorm}.
Let $\{u_i\}$ be a sequence such that $u_i\in H^1_{h_i}$ for each $i$ and 
there is a constant $C$, such that $\|u_i\|_{H^1_{h_i}}\le C$ for all $i$.
Then, the sequence $\{u_i\}$ has a convergent subsequence in $L^2$.
\end{thm}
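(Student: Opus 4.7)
The plan is to deduce the theorem from the Fréchet--Kolmogorov (Riesz) compactness criterion in $L^2(\R^2)$, using Lemma~\ref{shift-continuity} as the key technical input. Recall that a bounded family $\{\tilde v_i\}$ in $L^2(\R^2)$ is precompact provided two conditions hold: uniform equi-integrability at infinity (tightness), and uniform $L^2$ shift-continuity, i.e.\ $\sup_i \|\tilde v_i(\cdot+\!rho)-\tilde v_i(\cdot)\|_{L^2(\R^2)}\to 0$ as $|\!rho|\to 0$.

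First, I extend each $u_i$ by zero outside $\Omega$ to obtain $\tilde u_i \in L^2(\R^2)$. Since $\|u_i\|_{H^1_{h_i}}\le C$ controls in particular $\|u_i\|_{L^2(\Omega)}$, the family $\{\tilde u_i\}$ is bounded in $L^2(\R^2)$, and because every $\tilde u_i$ is supported in the fixed bounded set $\overline\Omega$, tightness is automatic.

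Next, the uniform shift-continuity is exactly what Lemma~\ref{shift-continuity} supplies: for every $\!rho\in\R^2$,
\begin{equation*}
\int_{\R^2}[\tilde u_i(\!x+\!rho)-\tilde u_i(\!x)]^2\d\!x \le C|\!rho|\,\|u_i\|^2_{H^1_{h_i}} \le C'|\!rho|,
\end{equation*}
with $C'$ independent of $i$ since the constant $C$ of the lemma depends only on $\Omega$ and the common shape-regularity constant $\K$. In particular the right-hand side tends to $0$ as $|\!rho|\to 0$ uniformly in $i$.

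With tightness and uniform shift-continuity in hand, the Fréchet--Kolmogorov theorem yields a subsequence $\{\tilde u_{i_k}\}$ converging in $L^2(\R^2)$; restricting to $\Omega$ gives convergence of $\{u_{i_k}\}$ in $L^2(\Omega)$, which is the desired conclusion. The main work is really already done: all the genuinely delicate estimates (the boundary-strip bound of Lemma~\ref{bd-shift-theorem}, the ``zigzag'' length estimate \eqref{zigzag1}, and the piecewise fundamental-theorem-of-calculus argument with jump terms) have been absorbed into Lemma~\ref{shift-continuity}. The only point to check carefully is that the constants in Lemmas~\ref{bd-shift-theorem} and \ref{shift-continuity} depend on the triangulations only through the uniform shape-regularity $\K$, so that the shift-continuity modulus is genuinely uniform across the family $\{u_i\}$; this is explicit in the statements of those lemmas.
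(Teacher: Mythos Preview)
Your proposal is correct and follows essentially the same route as the paper: zero-extend, invoke Lemma~\ref{shift-continuity} for uniform shift-continuity (with constants depending only on $\Omega$ and $\K$), observe tightness from the common bounded support, and apply the Fr\'echet--Kolmogorov/Riesz criterion (the paper cites Theorem~2.12 in Adams). The only cosmetic difference is that the paper also mentions the trace estimate of Theorem~\ref{tracetheorem} when verifying Adams' hypotheses, whereas you handle the tail condition directly via the fixed support $\overline\Omega$; both are equivalent here.
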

\begin{proof}
It follows from \eqref{whole-shift} that the sequence $\{u_i\}$ is a shift-continuous
subset of $L^2$. This, together with the estimate \eqref{tracetheorem}, verifies the
condition for a subset of $L^2$ to be compact, see Theorem~2.12 in \cite{Adams}.
\end{proof}

\section{Discrete Korn's inequality for Naghdi shell}
\label{Korn-Naghdi}
In this section, we prove the discrete Korn's inequality \eqref{N-Korn-discrete} for the Naghdi shell model. 
We let $H_h=\!H^1_h\x \!H^1_h\x H^1_h$, and define a norm in this space by
\begin{equation}\label{h-norm}
\|(\!theta, \!u, w)\|_{H_h}:=\left[\sum_{\alpha=1,2}\left(\|\theta_\alpha\|^2_{H^1_h}+\|u_\alpha\|^2_{H^1_h}\right)+\|w\|^2_{H^1_h}\right]^{1/2}.
\end{equation}
Recall that the norm in $H^1_h$ is defined by \eqref{H1hnorm}.
Let $f(\!theta, \!u, w)$ be a semi-norm that is continuous with respect to the $H_h$  norm such that there is a $C$ and
\begin{equation}\label{N-f-continuous}
|f(\!theta, \!u, w)|\le C\|(\!theta, \!u, w)\|_{H_h}\ \forall\ (\!theta, \!u, w)\in H_h.
\end{equation}
We also assume that $f$ satisfies the condition that
if $(\!theta, \!u, w)\in  \!H^1\x \!H^1\x H^1$ defines a rigid body motion, as explained in the introduction, 
and $f(\!theta, \!u, w)=0$ then $\!theta=0$, $\!u=0$, and $w=0$.

We define a discrete energy norm 
on the space $H_h$ by
\begin{multline}\label{triple-norm}
\|(\!theta, \!u, w)\|_{E_h}:=
\left[\sum_{\alpha,\beta=1,2}\left(\|\rho_{\alpha\beta}(\!theta, \!u, w)\|^2_{0, \Omega_h}+\|\gamma_{\alpha\beta}(\!u, w)\|^2_{0,\Omega_h}\right)+
\sum_{\alpha=1,2}\|\tau_\alpha(\!theta, \!u, w)\|^2_{0,\Omega_h}\right.
\\
\left.+\sum_{e\in \E^0_h}h^{-1}_e\left(
\sum_{\alpha=1,2}\int_{e}\left(\lbra \theta_{\alpha}\rbra^2+\lbra u_{\alpha}\rbra^2\right)
+\int_{e}\lbra w\rbra^2\right)
+f^2(\!theta,\!u, w)\right]^{1/2}.
\end{multline}
The main result of this section is the  following theorem.
\begin{thm}\label{N-Korn-thm}
We assume that $\Omega\subset\RR^2$ is a polygon, on which $\T_h$ is a shape regular but not necessarily quasi-uniform 
triangulation with a regularity constant $\K$. Let $f(\!theta, \!u, w)$ be a seminorm on $H_h$
satisfies the condition \eqref{N-f-continuous}, and the rigid body motion condition.
There exists a constant $C$ that could be dependent on the shell mid-surface
and shape regularity of the triangulation, but otherwise independent of the triangulation,
such that
\begin{equation}\label{N-Korn-inequality}
\|(\!theta, \!u, w)\|_{H_h}\le C\|(\!theta, \!u, w)\|_{E_h}\ \forall\ (\!theta, \!u, w)\in H_h.
\end{equation}
\end{thm}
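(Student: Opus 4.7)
The plan is to argue by a Peetre--Tartar compactness--uniqueness scheme, in which the newly established discrete Rellich-type Theorem~\ref{uniformcompactembedding} plays the role that Rellich--Kondrachov plays in proving the continuous Korn inequality \eqref{N-Korn-continuous}. The central intermediate step is an auxiliary ``Korn inequality with lower-order tail''
\[
\|(\!theta, \!u, w)\|^2_{H_h}\le C\Bigl[\sum_{\alpha,\beta}\|\rho_{\alpha\beta}\|^2_{0,\Omega_h}+\sum_{\alpha,\beta}\|\gamma_{\alpha\beta}\|^2_{0,\Omega_h}+\sum_\alpha\|\tau_\alpha\|^2_{0,\Omega_h}+J^2+\|\!theta\|^2_{0,\Omega}+\|\!u\|^2_{0,\Omega}+\|w\|^2_{0,\Omega}\Bigr],
\]
where $J^2$ collects the three jump seminorms appearing in $\|\cdot\|_{E_h}$. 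Note that the seminorm $f$ is deliberately left out of this intermediate inequality; it will only be used at the very end of the contradiction argument.

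To derive the auxiliary inequality I would process the strain identities \eqref{N-bending}--\eqref{N-shear} in the order shear $\to$ membrane $\to$ bending. Equation \eqref{N-shear} gives $\partial_\alpha w=\tau_\alpha-b^\gamma_\alpha u_\gamma-\theta_\alpha$ pointwise on every element, controlling the piecewise gradient of $w$ by $\|\tau_\alpha\|_{0,\Omega_h}+\|\!u\|_{0,\Omega}+\|\!theta\|_{0,\Omega}$. Equation \eqref{N-metric} exhibits $\gamma_{\alpha\beta}$ as the symmetrized covariant derivative of $\!u$ modulo $b_{\alpha\beta}w$; combined with the jump seminorm of $u_\alpha$ and a Brenner-type discrete plane Korn inequality \cite{Brenner-2} adapted to the covariant derivative \eqref{covariant-derivative}, this yields $\|\!u\|_{\!H^1_h}$ in terms of the previously controlled quantities and $\|\!u\|_{0,\Omega}+\|w\|_{0,\Omega}$. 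Finally \eqref{N-bending} controls the symmetrized covariant derivative of $\!theta$ modulo already-controlled quantities (the contractions $b^\gamma_\alpha u_{\gamma|\beta}$ and $c_{\alpha\beta}w$), so one more application of the Brenner trick, with the jumps of $\theta_\alpha$, closes the loop.

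To run the compactness--uniqueness step, suppose \eqref{N-Korn-inequality} fails; pick a sequence $(\!theta^{(i)},\!u^{(i)},w^{(i)})\in H_{h_i}$ over shape-regular triangulations $\T_{h_i}$ of $\Omega$ with $\|(\!theta^{(i)},\!u^{(i)},w^{(i)})\|_{H_{h_i}}=1$ and $\|(\!theta^{(i)},\!u^{(i)},w^{(i)})\|_{E_{h_i}}\to 0$. By Theorem~\ref{uniformcompactembedding} and a diagonal extraction, a subsequence converges strongly in $L^2(\Omega)$ to a limit $(\!theta,\!u,w)$. To identify the limit one passes to the distributional limit in \eqref{N-bending}--\eqref{N-shear}: elementwise contributions converge because the piecewise strains tend to zero in $L^2$, while the jump contributions in the broken Green identities are dominated by
\[
\Bigl(\sum_{e\in\E^0_{h_i}}h^{-1}_e\int_e\lbra\cdot\rbra^2\Bigr)^{1/2}\Bigl(\sum_{e\in\E^0_{h_i}}h_e\int_e\phi^2\Bigr)^{1/2},
\]
and the shape-regularity consequence $\sum_e h^2_e\le C|\Omega|$ bounds the second factor by $C\|\phi\|_{0,\infty,\Omega}$, so the jump terms vanish in the distributional limit. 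Hence $(\!theta,\!u,w)\in\!H^1\x\!H^1\x H^1$ with all three shell strains zero, i.e., is a rigid body motion \cite{BCM}. Applying the auxiliary inequality to the difference $(\!theta^{(i)}-\!theta,\!u^{(i)}-\!u,w^{(i)}-w)\in H_{h_i}$ (legitimate because the continuous limit sits inside every $H_{h_i}$) upgrades $L^2$ convergence to convergence in the $H_{h_i}$ norm, since every right-hand side term tends to zero (strains of the difference are differences of strains and the limit's strains vanish; jumps of the difference equal jumps of $(\!theta^{(i)},\!u^{(i)},w^{(i)})$ because the limit is continuous across edges; and $L^2$ norms vanish by extraction). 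The continuity \eqref{N-f-continuous} combined with $f^2\le\|\cdot\|^2_{E_{h_i}}\to 0$ then yields $f(\!theta,\!u,w)=0$, and the rigid body hypothesis on $f$ forces the limit itself to vanish. Reapplying the auxiliary inequality to $(\!theta^{(i)},\!u^{(i)},w^{(i)})$ itself produces $\|(\!theta^{(i)},\!u^{(i)},w^{(i)})\|_{H_{h_i}}\to 0$, contradicting the normalization to $1$.

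The main obstacle I expect is the derivation of the auxiliary inequality. Brenner's plane discrete Korn estimate \cite{Brenner-2} is stated for Cartesian symmetric gradients, whereas the curvilinear shell strains couple $\!theta$, $\!u$, and $w$ through the curvature tensors and Christoffel symbols, so the three fields must be disentangled in the correct order (shear first, then membrane, then bending), and the plane estimate must be transferred to the covariant derivative \eqref{covariant-derivative} with constants uniform in $\T_h$. A secondary but subtle point is the passage to the distributional limit of the broken strains, which relies on the shape-regularity bound $\sum_e h^2_e\le C|\Omega|$ to prevent the jump terms from blowing up against smooth test functions.
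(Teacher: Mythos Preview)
Your proposal is essentially the paper's own proof: auxiliary Korn-with-$L^2$-tail via Brenner's plane estimate \cite{Brenner-2}, then a Peetre-style contradiction in which Theorem~\ref{uniformcompactembedding} supplies compactness and the broken Green identities push the strains to zero in the distributional limit. The one step the paper spells out that you skate over is the regularity of the $L^2$ limit: knowing the distributional strains vanish does not by itself give $(\!theta,\!u,w)\in\!H^1\times\!H^1\times H^1$; the paper uses the identity $\partial_{\alpha\beta}u_\lambda=\partial_\beta e_{\alpha\lambda}+\partial_\alpha e_{\lambda\beta}-\partial_\lambda e_{\alpha\beta}$ together with J.~L.~Lions' lemma to upgrade $e_{\alpha\beta}(\!u^0)\in L^2$ to $u^0_\alpha\in H^1$ (and similarly for $\!theta^0$), which is required before the rigid-motion characterization of \cite{BCM} can be invoked.
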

To prove this theorem, we need a discrete Korn's inequality for plane elasticity for piecewise functions in $H^1_h$, see (1.21) of \cite{Brenner-2}.
It says 
that there is a constant $C$ that might be dependent on the domain $\Omega$ and the shape regularity $\K$ of the triangulation $\T_h$, but
otherwise independent of $\T_h$ such that
\begin{equation}\label{Korn-Brenner}
\sum_{\alpha=1,2}\|u_\alpha\|^2_{H^1_h}\le C\left[\sum_{\alpha=1,2}\|u_\alpha\|^2_{0,\Omega_h}
+\sum_{\alpha, \beta=1,2}\|e_{\alpha\beta}(\!u)\|^2_{0,\Omega_h}+\sum_{e\in\E^0_h}
h^{-1}_e\int_{e}\sum_{\alpha=1,2}\lbra u_{\alpha}\rbra^2\right].
\end{equation}
Here $e_{\alpha\beta}(\!u)=(\partial_\beta u_\alpha+\partial_\alpha u_\beta)/2$ is the plane elasticity strain which is the symmetric part of the gradient of $\!u$.

\begin{proof}[Proof of Theorem~\ref{N-Korn-thm}]
From \eqref{Korn-Brenner},  the definitions \eqref{N-bending}, \eqref{N-metric}, and \eqref{N-shear}
of $\rho_{\alpha\beta}$, $\gamma_{\alpha\beta}$, and $\tau_\alpha$,
and the
definition of the discrete energy norm \eqref{triple-norm}, we see that
there is a constant $C$ such that
\begin{multline}\label{Korn-thm-proof1}
\|(\!theta, \!u, w)\|_{H_h}^2\le C\left[\|(\!theta,\!u, w)\|_{E_h}^2+\sum_{\alpha=1,2}\left(\|\theta_\alpha\|^2_{0,\Omega_h}+\|u_\alpha\|^2_{0, \Omega_h}\right)
+\|w\|^2_{0,\Omega_h}\right]\\
\forall\ (\!theta, \!u, w)\in H_h.
\end{multline}
On a fixed triangulation $\T_h$, it then follows from the Rellich--Kondrachov
compact embedding theorem and Peetre's lemma (Theorem 2.1, page 18 in \cite{Raviart}) that
there is a constant $C_{\T_h}$ such that
\begin{equation*}
\|(\!theta, \!u, w)\|_{H_h}\le C_{\T_h}\|(\!theta,\!u, w)\|_{E_h}\ \ \forall\ (\!theta, \!u, w)\in H_h.
\end{equation*}
We need to show that for a class  of shape regular triangulations, such $C_{\T_h}$ has an upper bound that only
depends on the shape regularity $\K$ of the whole class. 
If this is not true, there would exist a sequence
of triangulations $\{\T_{h_n}\}$ and an associated sequence of functions $(\!theta^n, \!u^n, w^n)$ in
$H_{h_n}$
such that
\begin{equation}\label{N-energy-to-0}
\|(\!theta^n, \!u^n, w^n)\|_{H_{h_n}}=1 \text{ and  }\|(\!theta^n,\!u^n, w^n)\|_{E_{h_n}}\le 1/n.
\end{equation}
It follows from Theorem~\ref{uniformcompactembedding}
that there is a subsequence,
still denoted by $(\!theta^n, \!u^n, w^n)$, such that 
\begin{equation}\label{converge-in-L2}
\lim_{n\to\infty}(\!theta^n, \!u^n, w^n)=(\!theta^0, \!u^0, w^0)\text{ in }\!L^2\x\!L^2\x L^2.
\end{equation}
We show that the limiting functions $\theta^0_\alpha$, $u^0_\alpha$, and $w^0$ are all in $H^1$, 
this limit defines a rigid body motion,  and it is zero, which will lead to a contradiction.

First, we show that $w^0$ is actually in $H^1$ and we have that
$\partial_\alpha w^0+\theta^0_\alpha+b^\beta_\alpha u^0_\beta=0$.
Since $\lim_{n\to\infty}(\!theta^n, \!u^n, w^n)=(\!theta^0, \!u^0, w^0)$ in $\!L^2\x\!L^2\x L^2$, 
in view of the definition \eqref{N-shear}, we have 
\begin{equation*}
\lim_{n\to\infty}\tau_\alpha(\!theta^n, \!u^n, w^n)=\tau_\alpha(\!theta^0, \!u^0, w^0)\ \text{ in } H^{-1}.
\end{equation*}
In the above expressions, the derivatives $\partial_\alpha w^n$ and $\partial_\alpha w^0$ are understood in distributional sense. 

For any compactly supported smooth functions $\phi^\alpha\in\D(\Omega)$, for each $n$, we have
\begin{multline*}
\langle\tau_\alpha(\!theta^n, \!u^n, w^n),\phi^\alpha\rangle
=
-\int_{\Omega_{h_n}}w^n\partial_\alpha\phi^\alpha
+\int_{\Omega_{h_n}}(\theta^n_\alpha+b^\beta_\alpha u^n_\beta)\phi^\alpha
\\=
\int_{\Omega_{h_n}}(\partial_\alpha w^n+\theta^n_\alpha+b^\beta_\alpha u^n_\beta)\phi^\alpha-
\sum_{e\in\E^0_{h_n}}\int_e\lbra w^n\rbra_{n_\alpha}\phi^\alpha.
\end{multline*}
Here, $n_\alpha$ are the components of the unit normal $\!n$ to the edge $e$.
If $e$ is shared by $\tau_1$ and $\tau_2$ with unit outward normals being $\!n_1$ and $\!n_2$, on which the restrictions of 
$w$ are $w_1$ and $w_2$, 
then $\lbra w\rbra_{n_\alpha}=w_1n_{1\alpha}+w_2 n_{2\alpha}$ is the jump of $w$ with respect to $\!n$.
Summation convention is also used in $\lbra w^n\rbra_{n_\alpha}\phi^\alpha$ with $\alpha$ being viewed as a repeated 
sub and super scripts.
Using H\"older inequality and the trace inequality \eqref{trace}, we get
\begin{multline*}
|\langle\tau_\alpha(\!theta^n, \!u^n, w^n),\phi^\alpha\rangle|\le 
\|\tau_\alpha(\!theta^n,\!u^n,w^n)\|_{0,\Omega_{h_n}}\|\phi^\alpha\|_{0,\Omega}\\
+
C\left[\sum_{e\in\E^0_{h_n}}h^{-1}_e\int_e\lbra w^n\rbra^2\right]^{1/2}\sum_{\alpha=1}^2\left[|\phi^\alpha|^2_{0,\Omega}+
\sum_{\tau\in\T_{h_n}}h^2_\tau|\phi^\alpha|^2_{1,\tau}\right]^{1/2}.
\end{multline*}
Since $\|(\!theta^n,\!u^n, w^n)\|_{E_{h_n}}\to 0$, 
we have 
\begin{equation*}
\lim_{n\to\infty}\tau_\alpha(\!theta^n, \!u^n, w^n)=0\ \text{ in } H^{-1}.
\end{equation*}
Therefore, in $H^{-1}$, we have
$\tau_\alpha(\!theta^0, \!u^0, w^0)=\partial_\alpha w^0+\theta^0_\alpha+b^\beta_\alpha u^0_\beta=0$.
Since $\theta^0_\alpha$ and $u^0_\alpha$ are in $L^2$, this equation shows that the weak derivatives of $w^0$ are in $L^2$. Thus we have $w^0\in H^1$ and
\begin{equation}\label{shear=0}
\tau_\alpha(\!theta^0, \!u^0, w^0)=\partial_\alpha w^0+\theta^0_\alpha+b^\beta_\alpha u^0_\beta=0.
\end{equation}

Next we show $u^0_\alpha\in H^1$ and $\gamma_{\alpha\beta}(\!u^0, w^0)=0$.
We have $\lim_{n\to\infty}\gamma_{\alpha\beta}(\!u^n, w^n)=\gamma_{\alpha\beta}(\!u^0, w^0)$ in $H^{-1}$, in which the derivatives 
$\partial_\alpha u^n_\beta$ and $\partial_\alpha u^0_\beta$ are understood in the distributional sense.
Let $\phi^{\alpha\beta}$ be an arbitrary symmetric 
tensor valued function  with components in $\D(\Omega)$. For each $n$ we have
\begin{multline*}
\langle \gamma_{\alpha\beta}(\!u^n, w^n),\phi^{\alpha\beta}\rangle=-\int_{\Omega}u^n_\alpha\partial_\beta\phi^{\alpha\beta}
-\int_{\Omega}(\Gamma^\lambda_{\alpha\beta}u^n_\lambda+b_{\alpha\beta}w^n)\phi^{\alpha\beta}\\
=\int_{\Omega_{h_n}}\gamma_{\alpha\beta}(\!u^n, w^n)\phi^{\alpha\beta}-\sum_{e\in\E^0_{h_n}}\int_e\lbra u^n_{\alpha}\rbra_{n_\beta}\phi^{\alpha\beta}.
\end{multline*}
It follows from this equation and the assumption  
$\lim_{n\to\infty}\|(\!theta^n,\!u^n, w^n)\|_{E_{h_n}}=0$ that we have $\lim_{n\to\infty}\langle \gamma_{\alpha\beta}(\!u^n, w^n),\phi^{\alpha\beta}\rangle=0$.
Thus $\gamma_{\alpha\beta}(\!u^0, w^0)=0$, in which the derivatives are understood in the distributional sense.
In view of the definition \eqref{N-metric}, we have
\begin{equation*}
e_{\alpha\beta}(\!u^0)=\Gamma^\lambda_{\alpha\beta}u^0_\lambda+b_{\alpha\beta}w^0.
\end{equation*}
Since $u^0_\alpha$ and $w^0$ are in $L^2$, so $e_{\alpha\beta}(\!u^0)\in L^2$.
In the sense of distribution, there is the identity that \cite{BCM, Lions}
\begin{equation*}
\partial_{\alpha\beta}u^0_\lambda=\partial_\beta e_{\alpha\lambda}(\!u^0)+\partial_\alpha e_{\lambda\beta}(\!u^0)-\partial_\lambda e_{\alpha\beta}(\!u^0).
\end{equation*}
From this we see that $\partial_{\alpha\beta}u^0_\lambda\in H^{-1}$. It follows from a Lemma of J.L. Lions (whose assumption on the domain 
is met by our polygon, see page 110 of \cite{Lions} and page 
124 of \cite{BCM}) and 
the fact $\partial_{\beta}u^0_\lambda\in H^{-1}$ that
$\partial_{\beta}u^0_\lambda\in L^2$. Therefore, we proved that $u^0_\alpha\in H^1$, and we have  
\begin{equation}\label{membrane=0}
\gamma_{\alpha\beta}(\!u^0, w^0)=0.
\end{equation}

We then show that $\theta^0_\alpha\in H^1$ and $\rho_{\alpha\beta}(\!theta^0, \!u^0, w^0)=0$.
As in the above, in the space $H^{-1}$, we have $\lim_{n\to\infty}\rho_{\alpha\beta}(\!theta^n, \!u^n, w^n)=\rho_{\alpha\beta}(\!theta^0, \!u^0, w^0)$,
in which the derivatives are understood in the distributional sense.
For arbitrary $\phi^{\alpha\beta}\in\D(\Omega)$, and for any $n$, we have the identity  
\begin{multline*}
\langle \rho_{\alpha\beta}(\!theta^n, \!u^n, w^n) , \phi^{\alpha\beta}\rangle=
\int_{\Omega_{h_n}}\rho_{\alpha\beta}(\!theta^n, \!u^n, w^n)\phi^{\alpha\beta}
-\sum_{e\in\E^0_{h_n}}\int_e\lbra \theta^n_{\alpha}\rbra_{n_\beta}\phi^{\alpha\beta}
+\sum_{e\in\E^0_{h_n}}\int_eb^\gamma_\alpha\lbra u^n_{\gamma}\rbra_{n_\beta}\phi^{\alpha\beta}.
\end{multline*}
Using the assumption $\lim_{n\to\infty}\|(\!theta^n,\!u^n, w^n)\|_{E_{h_n}}\to 0$ and this 
equation, we get 
\begin{equation*}
\lim_{n\to\infty}\langle \rho_{\alpha\beta}(\!theta^n, \!u^n, w^n),\phi^{\alpha\beta}\rangle=0.
\end{equation*}
Therefore $\rho_{\alpha\beta}(\!theta^0,\!u^0, w^0)=0$, in which the derivatives $\partial_\alpha\theta^0_\beta$ are  understood in 
in the distributional sense.
In view of the definition \eqref{N-bending} of $\rho_{\alpha\beta}(\!theta^0,\!u^0, w^0)$, 
and the proved regularity that $u^0_\alpha\in H^1$, we see that $e_{\alpha\beta}(\!theta^0)$
are in $L^2$. From this and the argument used above, we see $\partial^2_{\alpha\beta}\theta_\gamma\in H^{-1}$. Using the Lemma of J. L. Lions again,
we get the regularity $\theta^0_\alpha\in H^1$, and the equation
\begin{equation}\label{bending=0}
\rho_{\alpha\beta}(\!theta^0, \!u^0, w^0)=0.
\end{equation}

According to Lemma~3.4 of \cite{BCM},
using the regularities that $\theta^0_\alpha$, $u^0_\alpha$, and $w^0$ are all in $H^1$, and the equations \eqref{shear=0}, \eqref{membrane=0}, and \eqref{bending=0},
we conclude that  the displacement functions 
$(\!theta^0, \!u^0, w^0)$ defines a rigid body motion.

Finally, we show that $(\!theta^0, \!u^0, w^0)=0$.
It follows from the bound \eqref{Korn-thm-proof1}, the assumption \eqref{N-energy-to-0}, and the convergence \eqref{converge-in-L2}
that
$\lim_{n\to\infty}\|(\!theta^n-\!theta^0, \!u^n-\!u^0, w^n-w^0)\|_{H_{h_n}}=0$. Since $f$ is uniformly continuous
with respect to the norm of $H_{{h_n}}$ and since $f(\!theta^n, \!u^n, w^n)\to 0$ ($f$ is a part
of the energy norm \eqref{triple-norm}), we see
$f(\!theta^0, \!u^0, w^0)=0$. Thus $(\!theta^0, \!u^0, w^0)=0$. Therefore,
$\lim_{n\to\infty}\|(\!theta^n, \!u^n, w^n)\|_{H_{h_n}}=0$,
which is contradict to the assumption that
$\|(\!theta^n, \!u^n, w^n)\|_{H_{h_n}}=1$.
\end{proof}


As an example, we take
\begin{equation*}
f(\!theta, \!u, w)=
\left[\sum_{e\in\E^D_h}\int_{e}\sum_{\alpha=1,2}\theta^2_\alpha+\sum_{e\in\E^{S}_h\cup\E^D_h}
\left(\int_{e}\sum_{\alpha=1,2}u^2_{\alpha}+\int_{e}w^2\right)
\right]^{1/2}.
\end{equation*}
It follows from Theorem~\ref{tracetheorem} that there is a $C$ such that
the continuity condition \eqref{N-f-continuous} is satisfied by this $f$.
Under the assumption that  the measure of $\partial^D\Omega$ is positive,
it is verified in \cite{BCM} that if $(\!theta, \!u, w)$ defines a rigid body motion and  $f(\!theta, \!u, w)=0$ then $\!theta=0$, $\!u=0$, and $w=0$.
With this $f$ in the Korn's inequality \eqref{N-Korn-inequality},
we could add boundary penalty term
\begin{equation*}
\sum_{e\in\E^D_h}\int_{e}h^{-1}_e\sum_{\alpha=1,2}\theta^2_\alpha+\sum_{e\in\E^{S}_h\cup\E^D_h}
\left(h^{-1}_e\int_{e}\sum_{\alpha=1,2}u^2_{\alpha}+h^{-1}_e\int_{e}w^2\right)
\end{equation*}
to the squares of both sides of \eqref{N-Korn-inequality} to obtain an inequality that is  useful in  analysis of discontinuous Galerkin 
methods for Naghdi shell in which the essential boundary conditions are enforced by Nitsche's consistent boundary penalty method.

\section{Discrete Korn's inequality for Koiter shell}
\label{Korn-Koiter}
The wellposedness of the Koiter model \eqref{K-model}
is based on the Korn's inequality for Koiter shell that
there is a constant $C$ such that
\begin{multline}\label{Korn}
\sum_{\alpha=1,2}\|u_\alpha\|^2_{H^1}+\|w\|^2_{H^2}\le C
\left[\sum_{\alpha,\beta=1,2}\|\rho^K_{\alpha\beta}(\!u, w)\|^2_{L^2}+\sum_{\alpha,\beta=1,2}\|\gamma_{\alpha\beta}(\!u, w)\|^2_{L^2}+f^2(\!u, w)\right]
\\
\ \ \forall\
\!u\in \!H^1,\ w\in H^2.
\end{multline}
Here $f(\!u, w)$ is a semi-norm on $\!H^1\x H^2$ that satisfies the condition that
if $\!u, w$ defines a rigid body motion, as explained in the introduction, and $f(\!u, w)=0$ then $\!u=0$ and $w=0$.
We  generalize this inequality to piecewise functions on $\Omega_h$.
For Koiter shell problems. 
we also need to use the discrete space $H^2_h$ 
that is composed of piecewise 
$H^2$ functions with the norm defined by \eqref{H2hnorm}.

For $(\!u, w)\in \!H^1_h\x H^2_h$, we define a norm
\begin{equation}\label{K-h-norm}
\|(\!u, w)\|_{H^K_h}=\left(\sum_{\alpha=1,2}\|u_\alpha\|^2_{H^1_h}+\|w\|^2_{H^2_h}\right)^{1/2}.
\end{equation}
Let $f(\!u, w)$ be a semi-norm that is continuous with respect to this norm such that there is a $C$ that only depends on the shell mid-surface
and the regularity $\K$ of the triangulation $\T_h$
\begin{equation}\label{K-f-continuous}
|f(\!u, w)|\le C\|(\!u, w)\|_{H^K_h}\ \forall\ (\!u, w)\in \!H^1_h\x H^2_h.
\end{equation}
We assume that $f$ satisfies the rigid body motion condition that
if $(\!u, w)\in \!H^1\x H^2$ defines a rigid body motion and $f(\!u, w)=0$ then $\!u=0$ and $w=0$.
We define the discrete energy norm on the space $H^K_h$.
\begin{multline}\label{K-triple-norm}
\|(\!u, w)\|_{E^K_h}=
\left[
\sum_{\alpha,\beta=1,2}\|\rho^K_{\alpha\beta}(\!u, w)\|^2_{0,\Omega_h}+\sum_{\alpha,\beta=1,2}\|\gamma_{\alpha\beta}(\!u, w)\|^2_{0,\Omega_h}\right.
\\
\left.+\sum_{e\in \E^0_h}\left(
\sum_{\alpha=1,2}h^{-1}_e\int_{e}\lbra u_{\alpha}\rbra^2
+\sum_{\alpha=1, 2}
h^{-1}_e\int_{e}\lbra \partial_\alpha w\rbra^2
+
h^{-1}_e\int_{e}\lbra w\rbra^2\right)
+f^2(\!u, w)
\right]^{1/2}.
\end{multline}
We then have the following generalization of the Korn's inequality for Koiter shells to piecewise functions.
\begin{thm}\label{K-Korn-thm}
There exists a constant $C$ that could be dependent on the shell mid-surface,
and the shape regularity $\K$ of $\T_h$, but otherwise independent of the triangulation,
such that
\begin{equation}\label{K-Korn-inequality}
\|(\!u, w)\|_{H^K_h}\le C\|(\!u, w)\|_{E^K_h}\ \forall\ w\in H^2_h,\ u_{\alpha}\in H^1_h.
\end{equation}
\end{thm}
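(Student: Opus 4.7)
The plan is to parallel the proof of Theorem~\ref{N-Korn-thm} for the Naghdi case, replacing every appeal to the $H^1_h$ compact embedding with its natural $H^2_h$ analogue and being careful with the extra derivative carried by $w$. First I would establish the weak bound
\begin{equation*}
\|(\!u,w)\|_{H^K_h}^2 \le C\bigl[\|(\!u,w)\|_{E^K_h}^2 + \sum_{\alpha=1,2}\|u_\alpha\|_{0,\Omega_h}^2 + \|w\|_{1,\Omega_h}^2\bigr].
\end{equation*}
The membrane contribution is handled exactly as in the Naghdi proof: the definition \eqref{N-metric} and \eqref{Korn-Brenner} control $\sum_\alpha\|u_\alpha\|_{H^1_h}^2$ in terms of $\|\gamma_{\alpha\beta}(\!u,w)\|_{0,\Omega_h}^2$, the jumps of $u_\alpha$, and lower order $L^2$ terms. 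For the bending part, the key observation is that on each element $\tau$, the expression \eqref{K-curvature} gives $\partial^2_{\alpha\beta}w$ equal to $\rho^K_{\alpha\beta}(\!u,w)$ plus terms involving $u_\gamma$ in $H^1(\tau)$ and $w$ in $H^1(\tau)$; summing over $\tau\in\T_h$ and combining with the membrane estimate controls $\|w\|_{2,\tau}$ piecewise.

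The main step is a proof by contradiction: if \eqref{K-Korn-inequality} failed, we would obtain triangulations $\T_{h_n}$ with shape regularity $\K$ and pairs $(\!u^n,w^n)\in H^1_{h_n}\times\cdots\times H^2_{h_n}$ with $\|(\!u^n,w^n)\|_{H^K_{h_n}}=1$ and $\|(\!u^n,w^n)\|_{E^K_{h_n}}\le 1/n$. Since $\|u^n_\alpha\|_{H^1_{h_n}}$, $\|w^n\|_{H^1_{h_n}}$, and $\|\partial_\alpha w^n\|_{H^1_{h_n}}$ are all uniformly bounded (the last because $\|w^n\|_{H^2_h}$ controls jumps of $\partial_\alpha w^n$ across edges), Theorem~\ref{uniformcompactembedding} applied to each of these five sequences yields a subsequence with $u^n_\alpha\to u^0_\alpha$, $w^n\to w^0$, and $\partial_\alpha w^n\to v_\alpha$ in $L^2$. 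Because the edge jumps $\lbra w^n\rbra$ are summable with weight $h_e^{-1}$ and tend to zero in $E^K_{h_n}$, a test-function argument like the one in the Naghdi proof identifies $v_\alpha=\partial_\alpha w^0$ distributionally, so $w^0\in H^1$.

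Next I would pass to limits in the distributional equations. Testing $\gamma_{\alpha\beta}(\!u^n,w^n)$ and $\rho^K_{\alpha\beta}(\!u^n,w^n)$ against $\phi^{\alpha\beta}\in\D(\Omega)$, integrating by parts element by element, and estimating the resulting edge jumps of $u^n_\alpha$, $w^n$, and $\partial_\alpha w^n$ by the corresponding terms in \eqref{K-triple-norm}, I conclude $\gamma_{\alpha\beta}(\!u^0,w^0)=0$ and $\rho^K_{\alpha\beta}(\!u^0,w^0)=0$ distributionally. Lions' lemma applied as in the Naghdi proof upgrades $u^0_\alpha$ to $H^1$, and then the formula \eqref{K-curvature} expresses $\partial^2_{\alpha\beta}w^0$ as a sum of an $L^2$ function ($\rho^K_{\alpha\beta}(\!u^0,w^0)=0$ now makes sense) and $H^1$ and $L^2$ terms, giving $w^0\in H^2$. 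The continuous Korn's inequality \eqref{K-Korn-continuous} then shows $(\!u^0,w^0)$ is a rigid body motion. The weak bound and $L^2$ convergence give $(\!u^n,w^n)\to(\!u^0,w^0)$ in $H^K_{h_n}$, so the continuity hypothesis \eqref{K-f-continuous} and $f(\!u^n,w^n)\to 0$ yield $f(\!u^0,w^0)=0$; the rigid-body-motion assumption on $f$ forces $(\!u^0,w^0)=0$, contradicting $\|(\!u^n,w^n)\|_{H^K_{h_n}}=1$. The delicate step is identifying $v_\alpha$ with $\partial_\alpha w^0$ and then bootstrapping $w^0$ from $H^1$ to $H^2$ using the vanishing of $\rho^K_{\alpha\beta}$; everything else is bookkeeping parallel to the Naghdi argument.
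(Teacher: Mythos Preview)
Your argument is correct, but the paper takes a quite different and much shorter route. Rather than re-running the entire compactness/contradiction machinery, the paper simply reduces the Koiter inequality to the already-proved Naghdi inequality \eqref{N-Korn-inequality}: given $(\!u,w)\in\!H^1_h\times H^2_h$, one defines $\theta_\alpha:=-\partial_\alpha w-b^\gamma_\alpha u_\gamma\in H^1_h$, which forces $\tau_\alpha(\!theta,\!u,w)=0$ and $\rho_{\alpha\beta}(\!theta,\!u,w)=-\rho^K_{\alpha\beta}(\!u,w)$. One then checks the two-sided equivalences $\|(\!u,w)\|_{H^K_h}\simeq\|(\!theta,\!u,w)\|_{H_h}$ and $\|(\!u,w)\|_{E^K_h}\simeq\|(\!theta,\!u,w)\|_{E_h}$ (the edge jumps of $\theta_\alpha$ being comparable to those of $\partial_\alpha w$ and $u_\alpha$), and observes that the Koiter rigid-body condition on $f$ implies the Naghdi one. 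The Koiter inequality then drops out in one line from Theorem~\ref{N-Korn-thm}.

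Your direct approach is self-contained and would work even if the Naghdi result had not been established first; it also makes explicit the $H^2_h$ compactness mechanism (applying Theorem~\ref{uniformcompactembedding} separately to $w^n$ and to each $\partial_\alpha w^n$, then identifying the limit of the latter with $\partial_\alpha w^0$). The cost is the extra bookkeeping you flag as ``delicate'': the identification $v_\alpha=\partial_\alpha w^0$ and the bootstrap $w^0\in H^1\Rightarrow w^0\in H^2$ via $\rho^K_{\alpha\beta}(\!u^0,w^0)=0$. The paper's reduction sidesteps all of this by pushing the second-order information on $w$ into the auxiliary first-order variable $\!theta$, so that only the $H^1_h$ compact embedding is ever invoked.
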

\begin{proof}
Koiter's shell model is a restriction of the Naghdi's model on the subspace of zero-shear 
deformations. The discrete Korn's inequality \eqref{K-Korn-inequality} for Koiter shell can be derived from 
the discrete Korn's inequality for Naghdi shell \eqref{N-Korn-inequality}.

For $u_\alpha\in H^1_h$ and $w\in H^2_h$, we define piecewise function $\theta_\alpha\in H^1_h$ by
\begin{equation}
\theta_\alpha=-\partial_\alpha w-b^\gamma_\alpha u_\gamma. 
\end{equation}
We then have $\rho_{\alpha\beta}(\!theta, \!u, w)=-\rho^K_{\alpha\beta}(\!u, w)$ and $\tau_\alpha(\!theta, \!u, w)=0$.
There are constants $C_1$  and $C_2$ only depending on components of the mixed curvature tensor $b^\gamma_\alpha$ 
such that for all $e\in E^0_h$ and for all $\tau\in\T_h$ 
\begin{multline*}
C_1\left(
\sum_{\alpha=1,2}\int_{e}\lbra u_{\alpha}\rbra^2
+\sum_{\alpha=1, 2}
\int_{e}\lbra \partial_\alpha w\rbra^2\right)
\le
\sum_{\alpha=1,2}\int_{e}\lbra u_{\alpha}\rbra^2
+\sum_{\alpha=1, 2}
\int_{e}\lbra \theta_\alpha\rbra^2
\\
\le C_2 \left(
\sum_{\alpha=1,2}\int_{e}\lbra u_{\alpha}\rbra^2
+\sum_{\alpha=1, 2}
\int_{e}\lbra \partial_\alpha w\rbra^2
\right),
\end{multline*}
\begin{multline*}
C_1\left(\sum_{\alpha=1}^2\|u_\alpha\|_{1,\tau}+\|w\|_{2,\tau}\right)\le 
\sum_{\alpha=1}^2\|\theta_\alpha\|_{1,\tau}+\sum_{\alpha=1}^2\|u_\alpha\|_{1,\tau}+\|w\|_{1,\tau}\\
\le C_2
\left(\sum_{\alpha=1}^2\|u_\alpha\|_{1,\tau}+\|w\|_{2,\tau}\right).
\end{multline*}
From these, we get  the equivalences
\begin{equation}
C_1\|(\!u, w)\|_{H^K_h}\le \|(\!theta, \!u, w)\|_{H_h}\le C_2\|(\!u, w)\|_{H^K_h}
\end{equation}
and
\begin{equation}
C_1\|(\!u, w)\|_{E^K_h}\le \|(\!theta, \!u, w)\|_{E_h}\le C_2\|(\!u, w)\|_{E^K_h}.
\end{equation}
The continuity \eqref{K-f-continuous} implies the condition \eqref{N-f-continuous}. 
The inequality \eqref{K-Korn-inequality} then follows from the Korn's inequality for Naghdi shell \eqref{N-Korn-inequality}.
\end{proof}

As an example for the semi-norm satisfying the continuity condition \eqref{K-f-continuous}, we take
\begin{equation*}
f(\!u, w)=
\left[\sum_{e\in\E^{S}_h\cup\E^D_h}
\left(\int_{e}\sum_{\alpha=1,2}u^2_{\alpha}+\int_{e}w^2\right)
+\sum_{e\in\E^D_h}\int_{e}(D_{\!n}w)^2\right]^{1/2}.
\end{equation*}
It follows from Theorem~\ref{tracetheorem} that there is a $C$ only dependent on $\K$ such that
the continuity condition \eqref{K-f-continuous} is satisfied by this $f$.
Under the assumption that  the measure of $\partial^D\Omega$ is positive,
it is verified in \cite{BCM} that if $(\!u, w)\in \!H^1\x H^2$ defines a rigid body motion and $f(\!u, w)=0$ then $\!u=0$ and $w=0$.

\bibliographystyle{plain}

\end{document}